\documentclass[11pt]{amsart}
\usepackage{geometry}                % See geometry.pdf to learn the layout options. There are lots.
\geometry{letterpaper}                   % ... or a4paper or a5paper or ... 
\usepackage{graphicx}
\usepackage{amssymb}
\usepackage{epstopdf}
\DeclareGraphicsRule{.tif}{png}{.png}{`convert #1 `dirname #1`/`basename #1 .tif`.png}

\newtheorem{theorem}{Theorem}[section]
\newtheorem{lemma}{Lemma}[section]
\newtheorem{corollary}{Corollary}[section]

\newtheorem{conjecture}{Conjecture}[section]
\newtheorem{definition}{Definition}[section]
\newtheorem{example}{Example}[section]
\newtheorem{remark}{Remark}[section]

\numberwithin{equation}{section}

\def\R{\mathbb R}

\def\E{\mathbb E}

\def\G{\Gamma}

\def\om{\omega}
\def\d{\partial}
\def\s{\sigma}
\def\e{\epsilon}
\def\a{\alpha}
\def\b{\beta}
\def\g{\gamma}
\def\l{\lambda}

\title{Detecting intrinsic {global} geometry of an obstacle via  layered scattering}

\author[L.~Bunimovich]{Leonid Bunimovich}
\address{Georgia Institute of Technology, School of Mathematics, 
North Avenue, Atlanta, GA 30332, U.S.A.} 
\email{bunimovh@math.gatech.edu}
 
\author[G.~Katz]{Gabriel Katz}
\address{MIT, Department of Mathematics, 77 Massachusetts Ave., Cambridge, MA 02139, U.S.A.}
 %\address{5 Bridle Path Circle, Framingham, MA 01701, U.S.A.}
\email{gabkatz@gmail.com}

\begin{document}
\maketitle 

\begin{abstract} Given a closed $k$-dimensional submanifold $K$, incapsulated in a compact domain $M \subset \mathbb E^n$, $k \leq n-2$, we consider the problem of determining the intrinsic geometry of the obstacle $K$  (like volume, integral curvature) from the scattering data, produced by the reflections of geodesic  trajectories from the boundary of a tubular $\epsilon$-neighborhood $\mathsf T(K, \epsilon)$ of $K$ in $M$. The geodesics that participate in this scattering emanate from the boundary $\partial M$ and terminate there after a few reflections from the boundary $\partial  \mathsf T(K, \epsilon)$. However, the major problem in this setting is that a ray (a billiard trajectory) may get stuck in the vicinity of $K$ by entering some trap there so that this ray will have infinitely many reflections from $\partial \mathsf T(K, \epsilon)$. To rule out such a possibility, we modify the geometry of a tube $\mathsf T(K, \epsilon)$ by building it from spherical bubbles. We need to use $\lceil \dim(K)/2\rceil$ many bubbling tubes $\{\mathsf T(K, \epsilon_j)\}_j$ for detecting certain global invariants of $K$, invariants which reflect its intrinsic geometry. Thus the words "layered scattering" in the title. These invariants were studied by Hermann Weyl in his classical theory of tubes $\mathsf T(K, \epsilon)$ and their volumes.
\end{abstract}

%\section{}
%{\bf We consider an obstacle K of dimension less than (n-1) in a n-dimensional Euclidean space $\mathbf{E^n}$. Clearly, a ray in $\mathbf{E^n}$ will miss K with probability one. To uncover a structure (geometry) of K we developed an approach that is based on the, so called, layered scattering. In this approach one considers the layers over K, which have dimension n, and therefore are detectable by shining light 
%(or waves of another nature) on these layers. According to our theory, this can be done by considering the layers that shrink to the tiny (of codimension at least two) obstacle K. However, a major problem with this approach  is that a ray may get stuck in the vicinity of K by entering some trap there so that this ray will have infinitely many reflections of the obstacle K and will never get out. We overcome this difficulty by using the modern theory of chaotic (dispersing) billiards. It allows to recover approximatively some intrinsic geometric characteristic of the obstacle K, like e.g. its volume and global curvature. Using the Herman Weyl theory of tubes, we also describe what kind of layers and how many layers around the obstacle must be used in order to recover its geometric structure.}   
%\section{}
%\subsection{}

\section{Introduction}
This paper is about reconstructing a shape of a scatterer  from scattering data. We place the scatterer within a container and send signals (rays) from its boundary to the interior of the container and see whether a signal returns to the boundary.
 We outline a novel approach to the problem of determining  {an} inner global geometry (like volume, curvature, etc.) of an % $k$-dimensional 
 scatterer (obstacle) $K$, residing in the interior of a %$n$-dimensional 
Riemannian manifold $(M, g)$ with boundary, by the probing geodesic rays that originate in the boundary $\d M$.  In the previous  research of this sort, $K$ was a submanifold of $M$ of codimension zero \cite{S}, \cite{St}, \cite{NS}, \cite{NS1}, \cite{NS2}.  We aim here to investigate manifold obstacles $K$ of an arbitrary codimension. %In part, our results rely on certain a priori assumptions about the nature of $K$.  Some of the assumptions could be difficult to verify remotely. 
A goal of this paper is to point out to an interesting interplay between intrinsic and extrinsic geometry of the obstacle $K$. A mathematical setting which we employ here is actually a crude approximation of more realistic physical models which are used in echolocation, linear optics, etc. %\cite{} {\bf ????}. 
\smallskip

%REVISIT !! This work is influenced  by the circle of ideas in the field of Integral Geometry, centered on the Santal\'{o} formula \cite{S}, and further developed in \cite{St}, \cite{NS}, \cite{NS1}, \cite{NS2}. %We rely heavily on the Stoyanov paper \cite{St}.  
%\smallskip

Let $K$ be a compact path-connected subspace of a metric space $M$ equipped with a distance function $d_M: M\times M \to \R_+$. Then $d_M$ induces a new distance function $d_K: K \times K \to \R_+$ on $K$, defined informally as the length $d_K(x, y)$ of a shortest path, \emph{residing in} $K$, between a pair of points $x, y \in K$. The $K$-related geometric entities and quantities that may be expressed only in terms of $d_K$ are called {\sf intrinsic}, while the ones that rely in an essential way on the knowledge of $M$ and $d_M$ are called {\sf extrinsic}. Thus an extrinsic property depends on the \emph{relation} between $K$ and the ambient $M$. For example, if $K$ is a compact connected smooth curve in the plane $M = \mathbb E^2$, then its length is an intrinsic invariant of $K \subset \mathbb E^2$, but the diameter of $K$ is an extrinsic quantity. The common philosophy in mathematics (and science in general) is that the intrinsic properties of subspaces $K \subset M$ are more fundamental then their extrinsic ones. 
\smallskip 

Clearly, the space geodesics in the ambient manifold $M$ that intersect an obstacle $K$ of codimension $\geq 2$ has measure zero; so  the ``direct" scattering from $K$ is physically unobservable. However, thickening the obstacle $K$ may solve the problem.  A manifold obstacle $K$ is a core of a tubular $\e$-neighborhood $\mathsf T(K, \e)$ of $K$, residing in the interior of $M$, where $\e > 0$ is so small that $\mathsf T(K, \e)$ is the space of a fibration over $K$ with the fiber being an $\e$-ball of dimension $\dim M - \dim K$ (see Fig.\ref{TUBES}). The probing geodesic rays generate {\sf scattering data}. The ones, which are of interest to us, are produced by the reflections of geodesic trajectories in $M \setminus \mathsf T(K, \e)$ from the boundary $\d \mathsf T(K, \e)$, where $K$ is assumed to be a closed submanifold of the interior of $M$.  In fact, we need to use \emph{several} nested tubes $\{\mathsf T(K, \e_j)\}_j$ for detecting certain {\sf intrinsic} invariants of $K$, studied by Hermann Weyl in terms of the volumes of the tubes, viewed as functions of $\e$ \cite{We}. Thus the words "layered scattering" appear in the title. The minimal number of such reflective layers is $\lceil k/2\rceil$, where $k =\dim(K)$, the dimension of $K$. One may think of the layered scattering as probing $K$ by waves with sufficiently high frequencies, so that the corresponding wave lengths were much smaller that the sickness $\e$ of a tube. Waves of a frequency $f_j$ are reflected from the  boundary $\d \mathsf  T(K, \e_j)$ of a tube $\mathsf  T(K, \e_j)$. Our ability to determine global inner invariants of $K$ precisely depends on a priori assumptions that the {\sf shadow volumes} $\mathcal W(\e)$ (see formula (\ref{eq.shadow_vol})) of sufficiently narrow tubes $\d \mathsf T(K, \e)$ with the core $K$ vanish. \smallskip

Throughout the paper, we assume that the metric $g$ on $M$ is {\sf non-trapping}, that is, any geodesic in $M$ originates and terminates at points of the boundary $\d M$.  For example, if $M$ is a compact domain in the Euclidean space $\E^n$ or Hyperbolic space $\mathbb H^n$, the metric on $M$ is non-trapping (see \cite{K2}, \cite{K5}). \smallskip

Let an obstacle $K \subset \mathsf{int}(M)$ be a smooth $k$-dimensional submanifold of $M$, not necessarily connected. We denote by $n$ the dimension of $M$ and assume that $n \geq k+2$.
% \smallskip 
 %We assume that the obstacle $K$ as a smooth Riemannian submanifold of $M$. 

%We denote by $SM$ the space of the unitary tangent spherical fibration $SM \to M$ over $M$.  %{\bf PROBLEM !!}

%Following  \cite{We}, we consider the $\e$-tube $\mathsf T(K, \e) \subset \mathsf{int}(M)$ whose core is $K$. 
For a sufficiently small $\e >0$, the tube $\mathsf T(K, \e)$ is a disjoint union of all normal to $K$ geodesic $\e$-balls. Each ball is formed by geodesic segments $\g$ of length $\e$, emanating from a point  $m \in K$ so that $\g$ are orthogonal to $K$ at $m$. For a closed manifold $K$ and a small $\e >0$,  the tube $\mathsf T(K, \e)$ coincides with the $\e$-neighborhood $\mathsf U(K, \e)$ of $K$ in $M$. For a manifold $K$ with a boundary, the tube $\mathsf T(K, \e)$ is contained in $\mathsf U(K, \e)$, but has a different from $\mathsf U(K, \e)$ structure in the vicinity of $\d K$.\smallskip

Let $N_\e =_{\mathsf{def}}\, M \setminus  \mathsf{int}(\mathsf T(K, \e))$. Then $\d N_\e = \d M \coprod \d \mathsf T(K, \e)$ for all sufficiently small $\e >0$. %\smallskip

Denote by $SN_\e$ and $SM$ the spaces of the unitary tangent spherical fibrations over $N_\e$ and $M$, respectively. We consider the the Lioville $1$-form $\b$ and the symplectic $2$-form $\om  = d\b$ on the cotangent bundle $T^\ast M$.  The metric $g$ induces a bundle isomorphism $\Phi_g: T_\ast M \to T^\ast M$. Consider now the pull-backs $\b_g := \Phi_g^\ast(\b)$ and $\om_g = \Phi_g^\ast(\om)$ of these differential forms to $T_\ast M$ and restrict them to $SM$. The form $\b_g \wedge \om^{n-1}_g$, where $n = \dim M$, is a volume form on $SM$.  Here $\om^{n-1}_g$ denotes the $(n-1)$-st exterior power of the symplectic $2$-form $\om_g$. Recall that this volume form $\b_g \wedge \om^{n-1}_g$ is invariant under the geodesic flow $\phi^t_g: SM \to SM$.
\smallskip

We denote by $\d_1^+(SM) \subset \d(SM)$ the set of points $x =(m, v)$, where the tangent vector $v$ is either directed inside of $M$ or is tangent to $\d M$, and $m \in \d M$. Similarly, let $\d_1^-(SM) \subset \d(SM)$ be the set  $\{x =(m, v)\}$, where $v \in T_mM$ points outside of $M$ or is tangent to $\d M$. \smallskip

Consider now the geodesic billiard trajectories $\g$ in the smooth (or at least $C^3$-differen-tiable) Riemannian manifold $N_\e$ that emanate from $\d M$ and are {\sf reflected} from $\d \mathsf T(K, \e)$ according the laws of Geometric Optics \cite{Tab}. We stress that, by our convention, if such a billiard trajectory reaches $\d M$ in positive time, then it terminates there, thus providing us with the ``{\sf scattering data}" as in (\ref{eq.skattering_map}) below. Since the metric $g$ in $M$ is non-trapping, any geodesic curve in $M$ originates and terminates at points of $\d M$. However, some of the billiard trajectories $\g$ that originate in $\d M$ may be ``trapped" in $N_\e \setminus \d M$ %(in the vicinity of $\d T(K, \e)$) 
without reaching $\d M$ again in a positive time. Such a trajectory $\g$ must have infinitely many reflections from the boundary $\d \mathsf T(K, \e)$ and such phenomenon may occur, e.g., if the boundary of K is not smooth enough (see \cite{H}). If an initial point $(m, v) \in \d_1^+(SM)$ generates such a trapped trajectory, we call $(m, v)$ {\sf trapping initial data}. They form a subset $\mathsf{Trap}(\d M\leadsto, SN_\e)$ of $\d_1^+(SM)$. 
Since the metric $g$ in $M$ is non-trapping, any billiard trajectory $\g$ in $N_\e$, which is determined by a point $$x \in  \d_1^+(SM) \setminus  \mathsf{Trap}(\d M \leadsto, SN_\e),$$ will reach $\d M$ again in a finite time. Its lift $\tilde\g$ to $SN_\e$, after a few reflections of the boundary $SN_\e|_{\d \mathsf T(K, \e)}$, will terminate at a point of $\d_1^-(SM)$ (see Fig.\ref{bill}, the top diagram). A smooth involution $\tau: \d_1^-(S\mathsf T(K, \e)) \to \d_1^+(S\mathsf T(K, \e))$ represents the effect of an elastic reflection from the boundary $\d \mathsf T(K, \e)$.

%Put  $\mathsf{Trap}(SN_\e, \d M) := \d_1^+(SM)\cap \mathsf{Trap}(SN_\e)$. 
As a result, we have the following {\sf scattering map} 
\begin{eqnarray}\label{eq.skattering_map}
\mathcal B(K, \e): \d_1^+(SM) \setminus  \mathsf{Trap}(\d M \leadsto, SN_\e) \to \d_1^-(SM),
\end{eqnarray}
which is our main probing instrument (see Fig.\ref{bill} and Fig.\ref{fig.reflection}).  We will adjust $\mathcal B(K, \e)$ by varying $\e > 0$.
The map $\mathcal B(K, \e)$ preserves the $(2n-2)$-dimensional measure $\mu$ on $\d(SN_\e)$, defined by integrating the differential form $\pm\om_g^{n-1}|_{\d(SN_\e)}$ against Lesbegue measurable sets in $\d_1^\pm(SN_\e)$ \cite{K5}. 
\smallskip
 
Our main results about the recovery of the intrinsic global invariants of an obstacle $K$ from the scattering data, delivered by the maps $\{\mathcal B(K, \e)\}_\e$,  are described in Theorem \ref{th.main}, Theorem \ref{th.traping_volume}, Corollary \ref{cor.av_length}, and Theorem \ref{th.main_bubbling}.\smallskip

\begin{figure}[ht]
\centerline{\includegraphics[height=4in,width=4.3in]{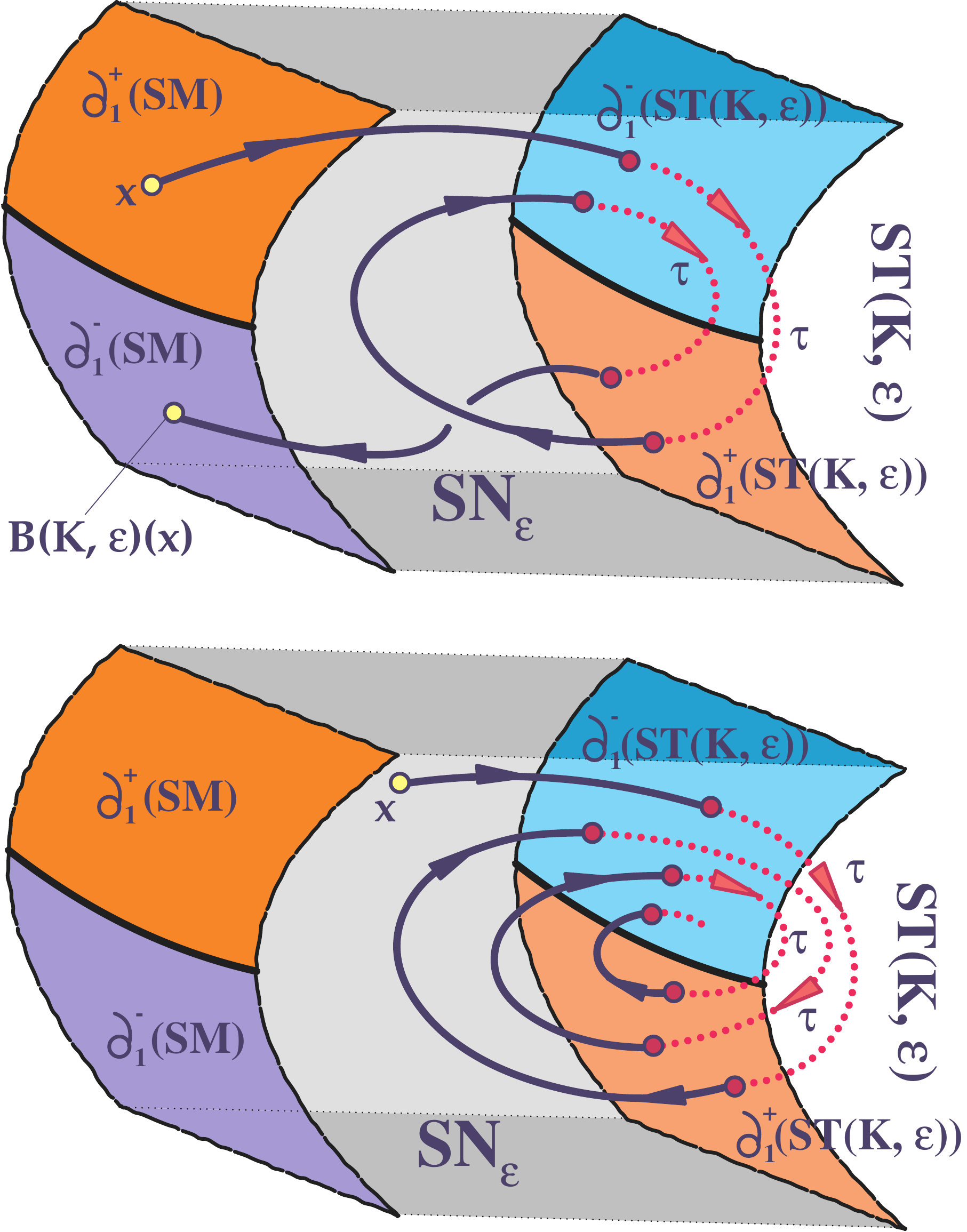}}\label{bill}
\bigskip
\caption{\small{{\sf Top picture:} the lift $\tilde\g$ of a non-trapped billiard trajectory $\g$ in $N_\e$ to $SN_\e$, which originates at $x \in \d_1^+(SM)$ and, after two ``reflections" $\tau$ from $\d_1^-(\mathsf T(K, \e))$, terminates at a point $\mathcal B(K, \e)(x)\in \d_1^-(SM)$. {\sf Bottom picture}: the lift $\tilde\g$ to $SN_\e$ of a trapped billiard trajectory $\g$ in $N_\e$, which originates at $x \in SN_\e$ and does not reach $\d(SM)$ again.}} 
\end{figure}

In summary, a physical model, which we explore here, is the one used in echolocation or in linear optics, where a probing signal originates at the boundary of a known region $M$, and the unknown obstruction $K \subset \mathsf{int}(M)$ is a submanifold, surrounded by several reflecting layers (e.g., with different reflecting properties with respect to different penetrating abilities). %In a more general setting (see Conjecture/Problem 6.1) %Question \ref{q.1}), 
%a density of these layers may be encoded in terms of the metric $g$ in the vicinity of the obstacle $K$. %As in any  investigations of this sort, we cannot avoid completely certain a priori assumptions about the nature of $K$.

\section{Probing an obstacle by remote skattering}

In what follows, we use the word "{\sf locus}" as a synonym for "set of points that satisfies a given geometric property".
The locus $\mathsf{Trap}(\d M \leadsto,\; SN_\e)$ has the Lesbegue measure zero (\cite{LP}, Theorem 1.6.2).  In contrast, {\sf the trapping locus} $\mathsf{Trap}(SN_\e) \subset SN_\e$, that consists of such initial data $(m, v) \in SN_\e$ for which the billiard trajectory in $N_\e$ will never reach $\d M$, may have a \emph{positive} Lesbegue measure $\mu$ \cite{Pe} (see Fig.1, the bottom diagram)! This possibility significantly complicates our efforts. We conjecture though that, for a non-trapping metric $g$ on $M$ and a generic obstacle $K$ whose dimension is less than $n-1$, the locus $\mathsf{Trap}(SN_\e)$ has zero measure $\mu$ for all sufficiently small $\e > 0$ (see Conjecture \ref{conjecture A}). 

It is known that, if $M$ is a ball in the Euclidean space $\E^n$ and the tube $\mathsf T(K, \e)$ is replaced by any disjoint union $\tilde K$ of {\sf convex} domains, then $\mu(\mathsf{Trap}(\d M \leadsto,\; \tilde K)) = 0$ by \cite{NS}, \cite{NS2}. (Note that the intrinsic geometry and topology of the manifolds  $\tilde K$'s that are unions of convex domains is trivial...) %This claim is far from obvious! 
Therefore, in the case of $K$ being a finite set in $\E^n$, one has $\mu(\mathsf{Trap}(SN_\e)) = 0$. On the other hand, if $K$ is not of a homotopy type of a finite set, then the tube $\mathsf T(K, \e)$ cannot be a disjoint union of convex domains for all sufficiently small $\e > 0$.
\smallskip

We denote by $L_\e(x)$ the length of the billiard trajectory (which may reflect from $\d \mathsf  T(K, \e)$ several times) in $N_\e$, determined by the initial data $x \in \d_1^+(SM) \setminus  \mathsf{Trap}(\d M \leadsto,\; SN_\e)$. It is known that $L(x) < \infty$ almost everywhere in $x \in \d_1^+(SM)$ \cite{LP}. %????
\smallskip

Our aim is to probe $K$ by means of the scattering maps $\{\mathcal B(K, \e)\}_\e$ (see (\ref{eq.skattering_map})). In order to detect some global intrinsic invariants of the Riemannian manifold $K$ (which will be described in the next section), we will use a nested family of sufficiently narrow tubes $$\mathsf T(K, \e_0) \supset \mathsf T(K, \e_1) \supset \ldots \supset \mathsf T(K, \e_k)$$  
and the family of the scattering maps $\{B(K, \e_i)\}_{0 \leq i \leq k}$ that are associated with them. For a ``clean" detection, we need to assume that $\mu(\mathsf{Trap}(SN_{\e_i}) = 0$. It worthwhile to mention that we do not require $\mathsf T(K, \e_i)$  to be narrow. \smallskip
%Thus the word ``layered" in the title. 

\begin{figure}[ht]
\centerline{\includegraphics[height=3.5in,width=6in]{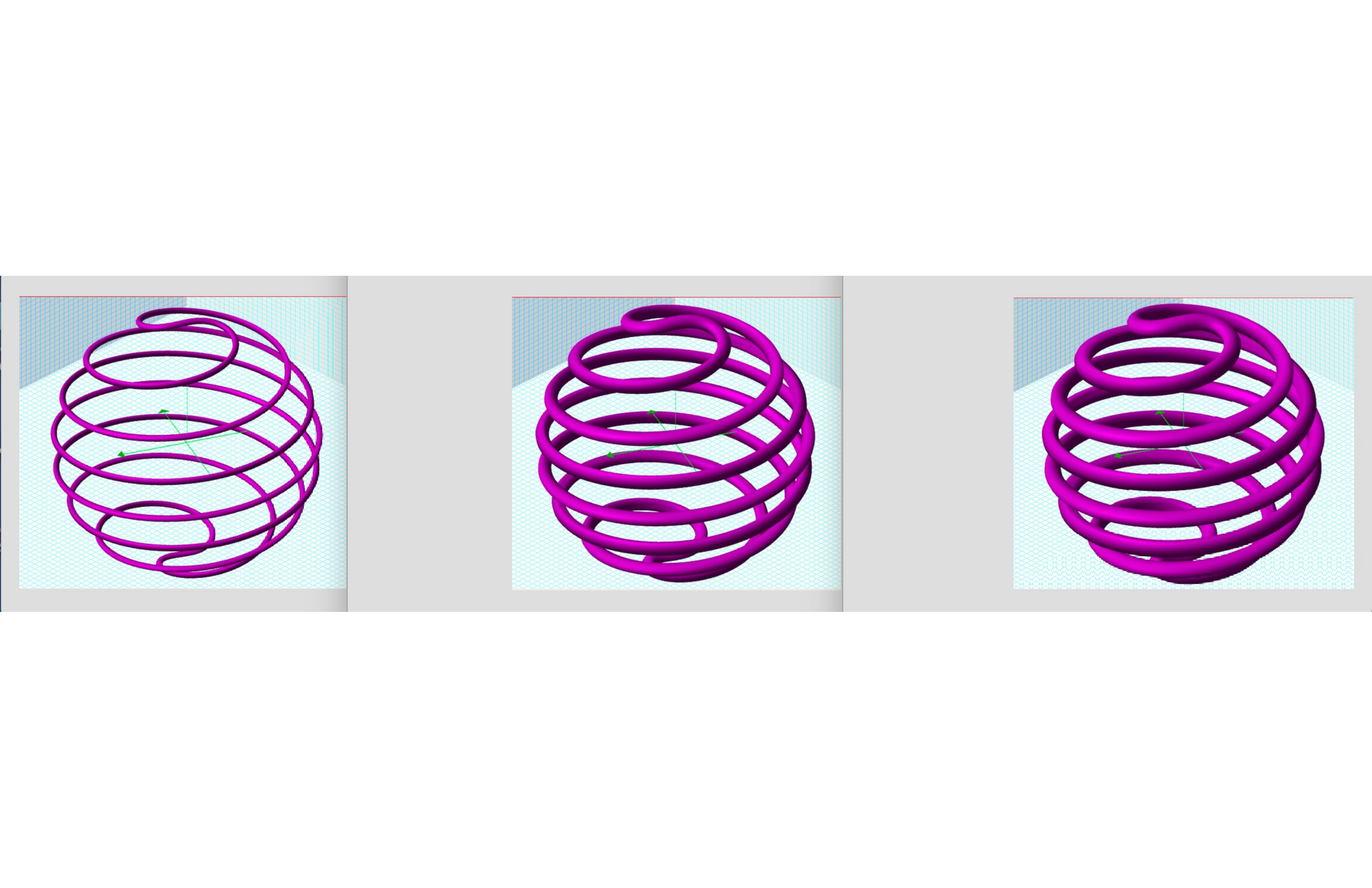}}
\bigskip
\caption{\small{Three Weyl tubes $\{\mathsf T(K,\e_i)\}_{i =1, 2, 3}$ with the same $1$-dimensional core loop $K \subset \E^3$.}} \label{TUBES}
\end{figure}

\section{Weyl's theory of tubes}

To formulate the Hermann Weyl theory of tubes \cite{We}, we need to introduce several geometric ingredients.\smallskip 

Let  $K$ be a compact smooth $k$-dimensional submanifold of the Euclidean space $\E^n$. We also consider $K$ as a Riemannian manifold with a symmetric connection $\{\Gamma^\kappa_{\; \a\b}\}$. \smallskip

Following \cite{We}, we introduce the {\sf Riemann curvature tensor} $\{R^\kappa_{\; \l\a\b}\}$ on $K$ as:
\begin{eqnarray}
R^\kappa_{\; \l\a\b} = \Big(\frac{\d\G^\kappa_{\,\l\b}}{\d u^\a} - \frac{\d\G^\kappa_{\,\l\a}}{\d u^\b} \Big) +
\sum_\rho \big(\G^\kappa_{\, \rho\a}\; \G^\rho_{\, \l\b} - \G^\kappa_{\, \rho\b}\; \G^\rho_{\, \l\a}\big),
\end{eqnarray}
and the skew-symmetric in $\a, \b$ and in $\kappa, \l$ the {\sf Weyl tensor} $\{R^{\kappa\mu}_{\a\b}\}$ by the formula:

\begin{eqnarray}
R^\kappa_{\; \l\a\b} = \sum_\mu g_{\l\mu} R^{\kappa\mu}_{\a\b}
\end{eqnarray}

\begin{theorem}\label{We}\cite{We} For any compact smooth $k$-dimensional submanifold $K \subset \E^n$,
the volume of the tube $\mathsf T(K, \e)$, for all sufficiently small $\e > 0$, is a polynomial $P_K(\e)$ of degree $n$ in the variable $\e$, divisible by the monomial $\e^{n-k}$. It is given by the formula
\begin{eqnarray}\label{a}
& vol(\mathsf T(K, \e)) =  \\ 
& vol(B^{n-k})\cdot \sum_{\{\ell\, \in\, [0, k], \; \ell \equiv 0 \mod 2\}}\, \frac{\mathsf Q_\ell(K)}{(n-k+2)(n-k+4) \ldots (n-k +\ell)}\; \e^{n-k+\ell},\nonumber
\end{eqnarray}
where $vol(B^{n-k})$ is the volume of the unit Euclidean ball, and 
\begin{eqnarray}\label{b}
\mathsf Q_\ell(K) := \int_K H_\ell \; \mu_K,
\end{eqnarray}
where the measure $\mu_K$ is the volume $k$-form on $K$,  and the function $H_\ell: K \to \R$ can be written in terms the Weyl tensor $\{R^{\kappa\mu}_{\a\b}\}$ of $K$ as  
\begin{eqnarray}\label{c}
H_\ell :=
\frac{1}{2^\ell (\ell/2)!} \sum_{1',\, \ldots\, ,\, \ell'}\Big\{\pm \sum_{\a_1, \dots , \a_\ell} R^{\a_{1'}\a_{2'}}_{\a_1\a_2}\; R^{\a_{3'}\a_{4'}}_{\a_3\a_4}\; \ldots \; R^{\a_{\ell'-1}\a_{\ell'}}_{\a_{\ell-1}\a_\ell}\Big\}. %\nonumber
\end{eqnarray} 
\hfill $\diamondsuit$
 \end{theorem}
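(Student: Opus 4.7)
\medskip
\noindent\textbf{Proof proposal.} The plan is to reduce the volume computation to an integral over the normal bundle $\nu K \to K$ via the normal exponential map, expand the Jacobian of that map as a polynomial in $\e$, and then invoke the Gauss equations to convert the resulting extrinsic curvature integrands into intrinsic Weyl/Riemann tensor invariants.

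First, choose $\e > 0$ so small that the normal exponential map $\exp^\perp: D_\e(\nu K) \to \mathsf T(K,\e)$ (defined on the disk bundle of radius $\e$ in $\nu K$) is a diffeomorphism onto the tube. Then
\begin{eqnarray*}
\mathrm{vol}(\mathsf T(K,\e)) = \int_K \Big(\int_{D_\e(\nu_m K)} J(m, v)\; dv\Big)\, \mu_K(m),
\end{eqnarray*}
where $J(m,v) = |\det d_v \exp^\perp|$ is the Jacobian along the normal fiber over $m$. Choose a local orthonormal frame $\{e_1,\ldots,e_k\}$ tangent to $K$ at $m$ and $\{\nu_1,\ldots,\nu_{n-k}\}$ normal to $K$. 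Writing $v = \sum_\rho t^\rho \nu_\rho$ and invoking the standard computation (e.g.\ via the Weingarten map $A_v: T_m K \to T_m K$ and the Gray--Vanhecke Jacobi field expansion), one obtains
\begin{eqnarray*}
J(m,v) = \det\bigl(I_{T_mK} - A_v\bigr) \;+\; \text{(higher-order terms in } v\text{ from tangential curvature of } \E^n),
\end{eqnarray*}
and since the ambient space is flat Euclidean, the correction terms vanish and $J(m,v) = \det(I - A_v)$ exactly. Thus $J$ is a polynomial of degree $k$ in the components $t^\rho$ of $v$, whose coefficients are elementary symmetric functions $\s_\ell(A_v)$ of the shape operator.

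Second, pass to polar coordinates $v = r\omega$ with $r \in [0,\e]$ and $\omega$ on the unit sphere $S^{n-k-1} \subset \nu_m K$. The fiber integral becomes
\begin{eqnarray*}
\int_0^\e r^{n-k-1}\, dr \int_{S^{n-k-1}} \det(I - r A_\omega)\, d\omega
\;=\; \sum_{\ell=0}^{k} (-1)^\ell \frac{\e^{n-k+\ell}}{n-k+\ell}\int_{S^{n-k-1}}\s_\ell(A_\omega)\, d\omega.
\end{eqnarray*}
By the symmetry $\omega \mapsto -\omega$ of the sphere and the homogeneity $\s_\ell(A_{-\omega}) = (-1)^\ell \s_\ell(A_\omega)$, the integral over $S^{n-k-1}$ vanishes for odd $\ell$. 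This yields a polynomial in $\e$ involving only the exponents $\e^{n-k+\ell}$ with $\ell$ even, and recovers the factor $\mathrm{vol}(B^{n-k})$ after reassembling the radial and angular pieces (the denominator $(n-k+2)(n-k+4)\cdots(n-k+\ell)$ emerges by comparing the $r$-integral $1/(n-k+\ell)$ to the corresponding factor in $\mathrm{vol}(B^{n-k})$).

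The hard step, and the genuinely Weyl-theoretic one, is to show that the angular integral
\begin{eqnarray*}
\int_{S^{n-k-1}} \s_\ell(A_\omega)\, d\omega
\end{eqnarray*}
is the pointwise value $H_\ell(m)$ given by formula (\ref{c}). Expanding $\s_\ell(A_\omega)$ as a sum of $\ell$-fold products of matrix entries of $A_\omega$ in the chosen bases, and using the fact that the spherical average of a monomial in the normal coordinates $\omega^\rho$ is nonzero only for even-degree monomials in each index and is given by a combinatorial pairing (Wick-type) formula, one reduces the average to a sum over perfect matchings of the $\ell$ normal indices. Each matching pairs two copies of the second fundamental form $h^\rho_{\a\b}$ across the same normal index $\rho$, and the Gauss equation
\begin{eqnarray*}
R_{\a\b\g\d} = \sum_\rho \bigl(h^\rho_{\a\g}h^\rho_{\b\d} - h^\rho_{\a\d}h^\rho_{\b\g}\bigr)
\end{eqnarray*}
(valid because the ambient curvature of $\E^n$ vanishes) converts each pair of $h$'s into an entry of the intrinsic Riemann tensor. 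After bookkeeping the signs and the combinatorial factor $\frac{1}{2^\ell (\ell/2)!}$ coming from the number of matchings, one matches the resulting expression term-by-term with the Weyl tensor combination in (\ref{c}). Integrating $H_\ell$ against $\mu_K$ produces $\mathsf Q_\ell(K)$ and completes the identification with (\ref{a}).

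The main obstacle is thus this last combinatorial/Gauss-equation step: the extrinsic-to-intrinsic conversion that turns the spherical average of a symmetric function of the Weingarten operator into a skew-symmetrized trace of curvature tensors. Everything else is a routine (though careful) unwinding of the normal exponential map and a polar-coordinates expansion.
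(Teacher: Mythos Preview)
The paper does not prove this statement at all: Theorem~\ref{We} is stated as a citation of Weyl's classical result \cite{We} (and the exposition in \cite{Gr}), and is used as a black box throughout. So there is no ``paper's own proof'' to compare against.

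Your outline is essentially the classical argument from Weyl's original paper and its modern presentations (e.g.\ Gray's monograph \cite{Gr}): parametrize the tube by the normal exponential map, compute the Jacobian as $\det(I - A_v)$ in the flat ambient, integrate in polar coordinates on the normal fiber to get the $\e^{n-k+\ell}$ structure and the parity cancellation, and then use the Gauss equation to rewrite the spherical averages of the elementary symmetric functions of the shape operator as intrinsic curvature polynomials. This is correct in structure, and you have correctly flagged the last step---the combinatorial bookkeeping that matches the averaged $\sigma_\ell(A_\omega)$ with the alternating sum in (\ref{c})---as the one requiring genuine work. If you want to turn this into a full proof, that step is where the effort goes; the rest is, as you say, routine unwinding.
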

 
The first coefficient $\mathsf Q_0(K)$ in (\ref{a}) is just the $k$-volume of $K$, while the second coefficient $\mathsf Q_2(K) = \frac{1}{2}\int_K \kappa\, d\mu$, where $\kappa$ is the scalar curvature of $K$ \cite{We}. \smallskip

By their very construction, all the terms $\{\mathsf Q_\ell(K)\}_\ell$ do not depend on the {\sf extrinsic geometry} of the embedding $K \subset \E^n$; i.e.,  $\{\mathsf Q_\ell(K)\}_\ell$ are the same for all {\sf isometric} embeddings of a given Riemannian manifold $K$ into the Euclidean space. In fact, these coefficients can be expressed in terms of the second fundamental form on $K$ \cite{We}. In case when $K$ is closed and $\dim K = 2q$, the leading coefficient $\mathsf Q_{2q}(K) = (2\pi)^q \chi(K)$, where $\chi(K)$ is the Euler characteristic of $K$ \cite{Gr}. For a compact surface $K \subset \R^3$, the formula (\ref{a}) reduces to the following cubic polynomial in $\e$
$$vol(\mathsf T(K, \e)) = 2\e \cdot vol(K) + \frac{4\pi \e^3}{3} \cdot \chi(K),$$
which is divisible by $\e$.
 
See \cite{Gr} for a comprehensive account of the analogues of these formulas, when the $\e$-tubes $\mathsf T(K, \e)$ are considered in a general Riemannian manifold $(M, g)$.

%%%%%%%%%%%%% 

\section{Layered scattering with reflections from the nested Weyl tubes}

We introduce the following notations: $$vol(SM):= \int_{SM} \b_g\wedge \om_g^{n-1},\; vol(SN_\e):= \int_{SN_\e} \b_g\wedge \om_g^{n-1}, \; vol(S\mathsf T(K,\e)):= \int_{S\mathsf T(K,\e)} \b_g\wedge \om_g^{n-1}.$$

Let us formulate now an important claim from \cite{St}, as it applies to our setting:
\begin{theorem}\label{St}{\cite{St}} Let $M$ be a compact smooth $n$-dimensional manifold, equipped with a non-trapping metric $g$, and  $N_\e := M \setminus \mathsf T(K, \e)$, where $\e > 0$ is so small that the $\e$-tubes, whose cores are the connected components of $K$,  do not intersect each other and $\mathsf T(K,\e) \cap \d M = \emptyset$. 

Then the volume of the non-trapping region in $SN_\e$ can be calculated as
\begin{eqnarray}\label{eq1.1}
\int_{\big\{ SN_\e \setminus \mathsf{Trap}(SN_\e)\big\}} \b_g\wedge \om_g^{n-1} =  
\int_{\big\{\d_1^+(SM)\, \setminus\, \mathsf{Trap}(\d M\leadsto,\; SN_\e)\big\}} 
L_\e\cdot\om_g^{n-1}.
\end{eqnarray}
Here $L_\e(x)$ is the combined length of several geodesics flow $\phi^t_g: SN_\e \to SN_\e$ trajectories, the first of which starts at a point $x \in \d_1^+(SM) \setminus \mathsf{Trap}(\d M\leadsto,\, SN_\e)$ and the last of which terminates at a point $y \in \d_1^-(SM)$; the other ends of these trajectories reside in $\d(S\mathsf T(K, \e)) \subset \d(SN_\e)$ (see Fig.1, the upper diagram).\footnote{the length of a $v^g$-trajectory $\tilde\g \subset SN_\e$ in the {\sf Sasaki metric} $gg$ (see \cite{Sa}) on $SN_\e$ equals the length of its projection $\g \subset N_\e$ in the metric $g$.} The images of these trajectories, under the projection $\pi: SN_\e \to N_\e$, give rise to the broken billiard trajectory in $N_\e$ that starts at $\pi(x)$ and, after several reflections from $\d\mathsf T(K, \e)$, terminates at $\pi(x)$.  \hfill $\diamondsuit$
\end{theorem}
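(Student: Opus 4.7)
The plan is to prove this as a Santaló-type formula for the broken billiard flow in $N_\e$. Two classical ingredients underlie the argument: \emph{(i)} the form $\b_g \wedge \om_g^{n-1}$ is the Liouville volume on $SM$ and is invariant under $\phi^t_g$, and on any smooth hypersurface transverse to the flow it decomposes as $dt \wedge (\pm\om_g^{n-1})$, the sign depending on the orientation of crossing; \emph{(ii)} the elastic reflection involution $\tau$ preserves the measure induced by $\om_g^{n-1}$ on $\d(S\mathsf T(K,\e))$, a standard consequence of Snell's law together with the symplectic structure on $T^\ast M$.

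The core construction is a ``straightening'' of the broken billiard flow. For each $x \in \d_1^+(SM) \setminus \mathsf{Trap}(\d M\leadsto, SN_\e)$ the lifted trajectory $\tilde\g_x$ in $SN_\e$ is a concatenation of finitely many $\phi^t_g$-arcs, glued at finitely many points of $\d(S\mathsf T(K,\e))$ by $\tau$, and reaches $\d_1^-(SM)$ at total arclength $L_\e(x)$. I would define
\[
\Psi: \{(x,t):\, x \in \d_1^+(SM) \setminus \mathsf{Trap},\; 0 \le t \le L_\e(x)\} \longrightarrow SN_\e \setminus \mathsf{Trap}(SN_\e), \quad (x,t)\mapsto \tilde\g_x(t),
\]
and show it is a bijection modulo measure zero (surjectivity by running any non-trapped trajectory in $SN_\e$ backward to its entry through $\d_1^+(SM)$; injectivity by uniqueness of geodesic billiard data). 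On each open geodesic arc Liouville invariance gives $\Psi^\ast(\b_g \wedge \om_g^{n-1}) = dt \wedge \om_g^{n-1}$; across reflection times the $\tau$-invariance of $\om_g^{n-1}$ on $\d(S\mathsf T(K,\e))$ makes the matching consistent, so $\Psi$ is measure-preserving on its domain. Fubini then yields
\[
\int_{SN_\e \setminus \mathsf{Trap}(SN_\e)} \b_g \wedge \om_g^{n-1} = \int_{\d_1^+(SM)\setminus \mathsf{Trap}} \Big(\int_0^{L_\e(x)} dt\Big)\, \om_g^{n-1} = \int_{\d_1^+(SM)\setminus \mathsf{Trap}} L_\e \cdot \om_g^{n-1}.
\]

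The main obstacle is the measure-theoretic bookkeeping around reflections. Three subsets must be handled: the glancing locus of rays tangent to $\d\mathsf T(K,\e)$ (nowhere dense, of $\om_g^{n-1}$-measure zero); the locus $\mathsf{Trap}(\d M \leadsto, SN_\e)$ of initial data producing infinitely many reflections in finite time (measure zero by \cite{LP}, as cited); and the interior trapping set $\mathsf{Trap}(SN_\e)$, which the statement simply excludes from the left-hand integral. Off the glancing locus, the implicit function theorem applied to the first-hit map at $\d(S\mathsf T(K,\e))$ shows that the reflection count along a trajectory is locally constant, so $\Psi$ restricts to a smooth diffeomorphism on each stratum of fixed reflection count; summing the Fubini calculation over strata completes the proof.
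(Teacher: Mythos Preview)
Your argument is the standard Santal\'o-type derivation and is essentially correct: straighten the broken billiard flow via the entry-data/time coordinates, use Liouville invariance on each smooth geodesic arc and $\tau$-invariance of $\om_g^{n-1}$ across reflections, then apply Fubini. The measure-zero bookkeeping you outline (glancing locus, the set $\mathsf{Trap}(\d M\leadsto,\,SN_\e)$ via \cite{LP}, stratification by reflection count) is exactly what is needed to make $\Psi$ a measure-preserving bijection off a null set.

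However, there is nothing to compare against: the paper does not prove this theorem. It is stated with the citation \cite{St} and closed by the $\diamondsuit$ marker, meaning the authors import the result from Stoyanov's paper and supply no argument of their own. Your proof sketch is precisely the kind of Santal\'o-formula-with-reflections argument that underlies Stoyanov's result, so you have effectively reconstructed the cited proof rather than diverged from anything in the present paper.
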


The relation in (\ref{eq1.1}) uses that $\b_g(v^g) =1$, where $v^g$ is {\sf the geodesic vector field} on $SN_\e$.\smallskip

Let
\begin{eqnarray}\label{V} 
\mathcal V(\e) := \int_{\big\{ SN_\e \setminus \mathsf{Trap}(SN_\e)\big\}} \b_g\wedge \om_g^{n-1}.
\end{eqnarray} 

Stoyanov's theorem says that the volume $\mathcal V(\e)$ is ``observable" and may be computed from the scattering information, encoded in the following integrable function 
\begin{eqnarray}
L_\e: \d_1^+(SM) \setminus \mathsf{Trap}(\d M\leadsto,\, SN_\e) \to \R_+.
\end{eqnarray} 
In other words, a value $\mathcal V(\e)$ is detected by the scattering data that include a {\sf traveling time} of the probing signal (of an appropriate frequency $f_\e$). Note that any two functions $L_\e$ and $\tilde L_\e$, which differ on a set of measure zero, both make equal contributions to the integral in the LHS of (\ref{eq1.1}). \smallskip

We will call a volume of the trapping portion $\mathsf{Trap}(SN_\e)$ of $SN_\e$ a {\sf shadow $\e$-volume} of $K$. It is given by the formula 
\begin{eqnarray}\label{eq.shadow_vol}
\mathcal W(\e) := vol(\mathsf{Trap}(SN_\e)) = \int_{\big\{\mathsf{Trap}(SN_\e)\big\}} \b_g\wedge \om_g^{n-1}
\end{eqnarray} 
At the first glance,  $\mathcal W(\e)$ looks as an {\sf extrinsic} invariant of the obstacle $K$, which, among other things, depends on $M$ and the inclusion $K \hookrightarrow M$. \smallskip

By definition,
\begin{eqnarray}\label{eq.volumes}
vol(SM) = vol(SN_\e) + vol(S\mathsf T(K,\e)) = \mathcal V(\e) + \mathcal W(\e) + vol(S\mathsf T(K, \e)). 
\end{eqnarray}

Thus, by Theorem \ref{St}, the shadow $\e$-volume equals
\begin{eqnarray}\label{eq.VOLUMES}
 \mathcal W(\e) \; = \; vol(SM) - vol(S\mathsf T(K, \e)) -  \int_{\big\{\d_1^+(SM) \setminus \mathsf{Trap}(\d M\leadsto,\, SN_\e)\big\}} L_\e \cdot \om_g^{n-1}. \nonumber
\end{eqnarray}
A volume $vol(S\mathsf T(K, \e))$ will turn out to be an invariant that is shared by all {\sf isometric} embeddings $\a: K \hookrightarrow M$, subject to the constraints $\mathsf{dist}_g(\d M, \a(K)) \geq \e$. The integral $\mathcal V(\e)$ in (\ref{V})  is an {\sf observable} quantity. \smallskip

Generally, the shadow $\e$-volume $\mathcal W(\e)$ is ``a black hole" or ``a known unknown" of the whole scattering enterprise. However, it has some good features too: i.e., by Theorem 1.2 from \cite{St}, $\mathcal W(\e)$ is a continuous function of the smooth embedding $\d \mathsf T(K, \e) \hookrightarrow M$, and thus of the parameter $\e >0$ and of the smooth regular embedding $\a: K  \hookrightarrow M$.
%In few cases, it is known that the shadow volumes vanish. For example, if the obstacle $K \subset \R^n$ is a finite union of {\sf covex domains} and $M$ is a ball that contains them, then $vol(\mathsf{Trap}(S(\overline{M \setminus \mathsf{int}(K)})) = 0$ \cite{NS}, \cite{NS2}.
\smallskip

\begin{figure}[ht]
\centerline{\includegraphics[height=3in,width=3.5in]{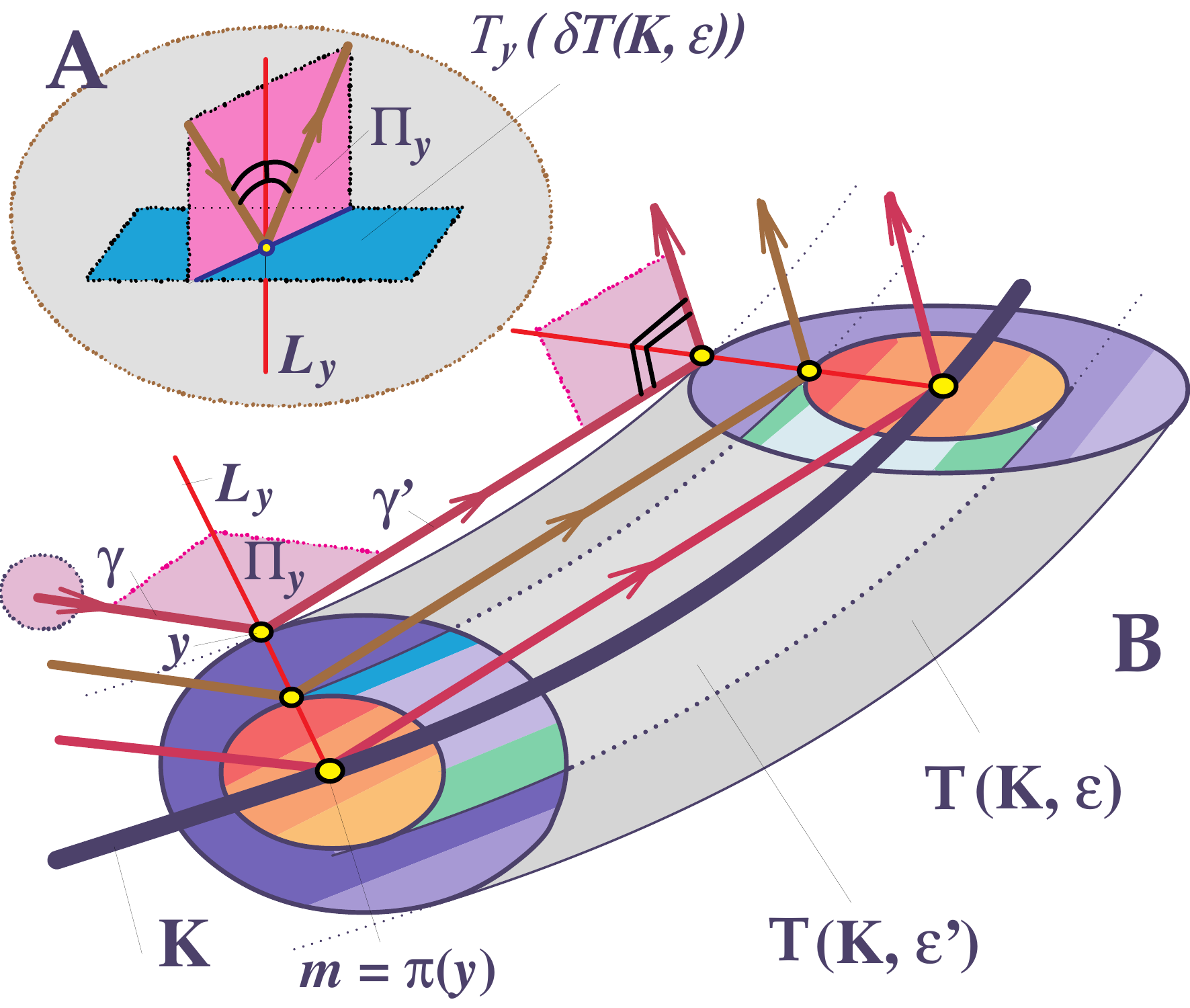}}\label{BILL}
\bigskip
\caption{\small{{\sf Diagram A :} A single reflection from the boundary of the tube $\mathsf T(K, \e)$. {\sf Diagram B :} Double reflections from the boundaries of two tubes, $\mathsf T(K, \e)$ and $\mathsf T(K, \e')$, and from their core $K$.}} \label{fig.reflection}
\end{figure}

$\bullet$ {\sf \underline{Hypothesis A}} {\it For all sufficiently small $\e> 0$, the volume $\mathcal W(\e)$ of the trapping locus $\mathsf{Trap}(SN_\e)$ is zero.} \hfill $\diamondsuit$ \smallskip

\begin{conjecture}\label{conjecture A} Hypothesis A is valid for any regular imbedding $K \subset \E^n$ of a smooth manifold $K$ of codimension $\geq 2$. \hfill $\diamondsuit$
\end{conjecture}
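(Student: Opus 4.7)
The plan is to combine a semi-dispersing analysis of the billiard in $N_\e$ near $\partial \mathsf T(K,\e)$ with a bubbling-based sandwich argument. First, working in Fermi coordinates based at $K$, I would compute the Weingarten operator of the hypersurface $\partial \mathsf T(K,\e)\subset \E^n$. Its principal curvatures split cleanly: the $n-k-1$ eigenvalues corresponding to the normal $\e$-sphere fibers are exactly $1/\e$, while the $k$ eigenvalues corresponding to directions that project isomorphically onto $TK$ remain bounded (in terms of the second fundamental form of $K\subset \E^n$) as $\e\to 0$. Because $n-k\geq 2$, at least one principal curvature is $1/\e \to \infty$, so from the $N_\e$ side the obstacle is semi-dispersing in Sinai's sense, with the dispersion concentrated in the fiber directions.

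Next I would exploit this partial convexity to establish a dichotomy along trapped orbits. Applying the mirror equation to the unstable cone along the fiber directions, each reflection at angle $\theta_j$ to $T_{p_j}\partial \mathsf T(K,\e)$ contributes an expansion factor of order $1+\tau_j/(\e\sin\theta_j)$, where $\tau_j$ is the preceding free-flight length. Together with the invariance of the Liouville measure and compactness of $N_\e$, this forces any trapped orbit to satisfy $\sin\theta_j\to 0$, i.e. to become asymptotically tangential to $\partial \mathsf T(K,\e)$. The locus of such ``grazing'' initial data is the critical set of the iterated reflection map restricted to $\d_1^+(SM)$, and a Sard-type argument shows that it lies in a countable union of smooth submanifolds of positive codimension, hence has $\mu$-measure zero in the part of $\mathsf{Trap}(SN_\e)$ accessible from $\d M$.

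To upgrade this to $\mu(\mathsf{Trap}(SN_\e))=0$ for the full trapping locus (not only the one reached from $\d M$), I would approximate $\mathsf T(K,\e)$ from inside and outside by a bubbled obstacle $\bigsqcup_i B(m_i,\e_\pm)$ centered on a $\delta$-net $\{m_i\}\subset K$, with $\e_-<\e<\e_+$ chosen so that the two bubbled regions bracket the smooth tube. For a disjoint union of convex balls, the results of \cite{NS}, \cite{NS2} give $\mu(\mathsf{Trap})=0$ outright. Stoyanov's continuity of the shadow $\e$-volume under smooth deformation of the embedding (Theorem~1.2 of \cite{St}), applied along a one-parameter interpolation, then sandwiches $\mathcal W(\e)$ between two quantities converging to zero as $\delta\to 0$, forcing $\mathcal W(\e)=0$.

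The main obstacle is the second step. Semi-dispersing billiards are well known to support nontrivial invariant sets of positive measure in their neutral directions (Bunimovich-stadium-type phenomena), so the mirror equation alone does not rule out persistent longitudinal drift along the $k$ directions tangent to $K$. Extracting a uniform Lyapunov estimate from the mixed hyperbolic/neutral dynamics, and in particular controlling the longitudinal drift using the compactness of $K$ together with the topological nontriviality of its normal bundle, is, in my view, the delicate heart of any proof of the conjecture; it is also the step most likely to require an additional hypothesis of the form ``generic $K$'' that the earlier informal statement in the paper alludes to.
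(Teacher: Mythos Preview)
The statement you are addressing is a \emph{conjecture}: the paper offers no proof of it, and in fact the entire Section~5 (tamed bubbling tubes, Lemma~\ref{lem.ZERO}, Theorem~\ref{th.main_bubbling}) is built precisely to \emph{avoid} having to establish Hypothesis~A for the smooth Weyl tubes. So there is no ``paper's own proof'' to compare against; your proposal must stand on its own, and it does not.

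There are two independent gaps. First, the sandwich step is not sound. A $\delta$-net of $\e$-balls centered on $K$ is \emph{not} a disjoint union of convex bodies---the balls overlap as soon as the net is fine enough to cover $K$---so the zero-trapping results of \cite{NS}, \cite{NS2} do not apply; the paper itself has to invoke ergodicity of dispersing billiards on a torus (Lemma~\ref{lem.ZERO}) to handle such unions. Even granting $\mathcal W=0$ for both bubbled approximants, no monotonicity of the trapping volume with respect to set inclusion of the obstacle is available: a trajectory can be trapped by the smooth hypersurface $\partial\mathsf T(K,\e)$ while escaping from both the inner and the outer bubbled obstacles, because trapping is governed by the curvature of the reflecting wall, not by containment. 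Finally, Stoyanov's continuity (Theorem~1.2 of \cite{St}) is stated for smooth deformations of a smooth boundary; your interpolation passes through obstacles with corner singularities, which lies outside the hypothesis of that theorem.

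Second, the semi-dispersing step is, as you yourself flag, the crux and it is left open. The mirror equation gives expansion only in the $(n-k-1)$ fiber directions; along the $k$ directions tangent to $K$ the billiard is genuinely neutral, and semi-dispersing billiards with flat directions are known to admit positive-measure invariant sets. Your Sard-type argument controls only the grazing set, not the possible slow drift along $K$ that could produce a fat trapping set; this is exactly why the authors state the claim as a conjecture and conjecture it only for \emph{generic} $K$ in the discussion preceding Conjecture~\ref{conjecture A}. Absent a mechanism that couples the transverse hyperbolicity to the longitudinal directions (and none is proposed), the argument does not close.
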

 
Our main result is obtained by combining Theorem \ref{We} and Theorem \ref{St}:
 
\begin{theorem}\label{th.main} Let $K$ be a compact smooth $k$-dimensional submanifold of the Euclidean space $\E^n$. Let $\mathsf T(K, \e)$ be a $\e$-tube, properly contained in a smooth compact domain $M \subset \E^n$, and denote $N_\e := M \setminus \mathsf{int}(\mathsf T(K, \e))$. 
 
If  Hypothesis A is valid, then, for any sequence $\e_0 > \e_1 > \dots > \e_{\lceil k/2\rceil} > 0$, where $\e_0$ is sufficiently small, %(so that $T(K, \e_0) \subset \mathsf{int}(M)$), 
the lengths (travel times) functions  $\big\{L_{\e_j}: \d_1^+(SM) \to \R\big\}_{0 \leq j \leq \lceil k/2\rceil}$ of billiard trajectories in the domains $\{N_{\e_j}\}_{0 \leq j \leq \lceil k/2\rceil}$ determine the  intrinsic global invariants $$\Big\{\mathsf Q_\ell := \int_K H_\ell \; \mu_K\Big\}_{0 \leq \ell \leq \lceil k/2 \rceil}$$ of the Riemannian manifold $K$, defined in formulae (\ref{b})-(\ref{c}). 
\end{theorem}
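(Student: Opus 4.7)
The plan is to translate the measured scattering data into the values of Weyl's tube-volume polynomial $P_K(\e)=vol(\mathsf T(K,\e))$ at the sample points $\e_0,\dots,\e_{\lceil k/2\rceil}$, and then to recover the intrinsic invariants $\mathsf Q_\ell(K)$ as its coefficients by Vandermonde inversion.

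For each $j$, I would first invoke Hypothesis A to obtain $\mathcal W(\e_j)=0$. Combined with the decomposition (\ref{eq.volumes}) and Stoyanov's Theorem \ref{St}, this yields
\begin{equation*}
vol(S\mathsf T(K,\e_j)) \;=\; vol(SM) \;-\; \int_{\d_1^+(SM)\,\setminus\,\mathsf{Trap}(\d M\leadsto,\,SN_{\e_j})} L_{\e_j}\cdot \om_g^{n-1},
\end{equation*}
so $vol(S\mathsf T(K,\e_j))$ is explicitly computable from the measured length function $L_{\e_j}$ and the known ambient geometry of $M$. Since $M\subset \E^n$ is flat Euclidean and $S\mathsf T(K,\e_j)$ is the restriction to the tube of the standard unit-sphere bundle, the Liouville form $\b_g\wedge \om_g^{n-1}$ factorizes globally as the product of the Riemannian volume on the base and the round measure on the $S^{n-1}$-fiber. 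Thus there is a universal dimensional constant $c_n$ with $vol(S\mathsf T(K,\e_j))=c_n\cdot vol(\mathsf T(K,\e_j))$, and the tube volume itself is recovered from the scattering data.

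Provided $\e_0$ is small enough for Weyl's Theorem \ref{We} to apply, the formula (\ref{a}) shows that, after factoring out $\e^{n-k}$, the function $\e\mapsto vol(\mathsf T(K,\e))$ is a polynomial in $\eta=\e^2$ of degree $\lfloor k/2\rfloor$ whose $\lfloor k/2\rfloor +1$ coefficients are explicit nonzero scalar multiples of the invariants $\mathsf Q_\ell(K)$, $\ell$ even, $0\leq \ell \leq k$. Evaluating at the distinct positive nodes $\eta_j=\e_j^2$ for $j=0,\dots,\lceil k/2\rceil$ produces a linear system whose coefficient matrix is Vandermonde; since $\lfloor k/2\rfloor+1\leq \lceil k/2\rceil +1$, any $\lfloor k/2\rfloor +1$ of its rows are linearly independent, so the system is uniquely solvable (it is exactly square when $k$ is even and overdetermined by one equation when $k$ is odd), and we read off every $\mathsf Q_\ell(K)$.

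The main obstacle I anticipate is fixing a single explicit threshold for $\e_0$ that simultaneously guarantees three conditions: (i) disjoint embeddedness of the normal $\e$-disks along $K$, needed for Weyl's polynomial expansion (\ref{a}); (ii) $\mathsf T(K,\e_0)\cap \d M=\emptyset$ and mutual disjointness of the tubes around the components of $K$, needed for Stoyanov's theorem in the stated form; and (iii) the measure-zero conclusion of Hypothesis A for all $\e \leq \e_0$. All three conditions are monotone under shrinking $\e$, so the infimum of the three individual thresholds suffices; once $\e_0$ is below it and $\e_0>\e_1>\dots>\e_{\lceil k/2\rceil}>0$, the rest is the routine algebraic inversion above.
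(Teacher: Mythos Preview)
Your proposal is correct and follows essentially the same route as the paper: use Hypothesis~A together with the decomposition (\ref{eq.volumes}) and Stoyanov's formula (\ref{eq1.1}) to express $vol(S\mathsf T(K,\e_j))$ in terms of the length data, apply Fubini to pass to $vol(\mathsf T(K,\e_j))$, then invoke Weyl's polynomial expression (\ref{a}) and interpolate from the $\lceil k/2\rceil+1$ sample values. Your treatment is in fact slightly sharper than the paper's own proof in one respect: you correctly note that $Q_K$ has degree $\lfloor k/2\rfloor$ in $\e^2$ (the paper writes $\lceil k/2\rceil$), so for odd $k$ the system is overdetermined by one equation, and you make the Vandermonde inversion explicit where the paper simply asserts that the sample values determine $Q_K$.
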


\begin{proof} %Let $\s_r$ be the standard volume of the unit sphere $S^r \subset \R^{r+1}$. 
Let $\e := \e_i$. 
%Evidently, $vol(SM) = vol(SN_\e) + vol(ST(K,\e))$. In turn, $vol(SN_\e)  = \mathcal W(\e) + \mathcal V(\e).$
By equation (\ref{eq.volumes}) and the Fubini Theorem, 
$$vol(\mathsf{Trap}(SN_\e)) = vol(SN_\e) -\mathcal V(\e) = vol(SM) - vol(S\mathsf T(K,\e)) - \mathcal V(\e)$$
$$\stackrel{Fubini}{=} \; \s_{n-1} \cdot \big[vol(M) - vol(\mathsf T(K,\e))\big] - \mathcal V(\e),$$
where $\s_{n-1}$ denotes the volume of the standard $(n-1)$-sphere of radius $1$ in $\E^n$.  \smallskip

By (\ref{a}), $vol(T(K,\e))$ is given by a polynomial $P_K(\e)$ of the form $\e^{n-k}\cdot Q_K(\e^2)$, where $Q_K(\e^2)$ is polynomial of degree $\lceil k/2 \rceil$ in $\e^2$.
Therefore 
\begin{eqnarray}\label{eq.d}
vol(\mathsf{Trap}(SN_\e)) = \s_{n-1} \cdot \big[vol(M) - \e^{n-k}\cdot Q_K(\e^2))\big] - \mathcal V(\e).
\end{eqnarray}

Assuming that $vol(\mathsf{Trap}(SN_\e)) = 0$, we get 
\begin{eqnarray}\label{e}
Q_K(\e^2)) =  \frac{1}{\e^{n-k}}\Big[ vol(M) - \frac{1}{\s_{n-1}} \mathcal V(\e)\Big], %\nonumber
\end{eqnarray}
where $\mathcal V(\e)$ is defined by formula (\ref{V}) and is determined by an integrable function \hfill\break $L_\e: \d_1^+(SM) \setminus \big(\mathsf{Trap}(SN_\e, \d M) \to \R$.% (that faithfully represents the time record of the scattering map  $\mathcal B(K, \e)$).

Since $Q_K$ is a polynomial of degree $\lceil k/2 \rceil$ in $\e^2$, then choosing any sequence $ \e_0 > \e_1 > \ldots > \e_{\lceil k/2 \rceil} > 0$, and applying (\ref{e}) to the members of the sequence,  we conclude that the collection of integrable functions $\{L_{\e_i}: \d_1^+(SM) \setminus \mathsf{Trap}(\d M \leadsto\, SN_{\e_i}) \to \R\}_{0 \leq i \leq \lceil k/2 \rceil}$ determines the polynomial $Q_K$. Thus the polynomial $P_K = \e^{n-k}\cdot Q_K$ is  the one in (\ref{a}). As a result, the intrinsic integral invariants $\{\mathsf Q_\ell(K)\}$ from formula (\ref{b}) are determined by the scattering data $\{L_{\e_i}\}_{0 \leq i \leq \lceil k/2 \rceil}$, provided that $\{vol(\mathsf{Trap}(SN_{\e_i})) = 0\}_{0 \leq i \leq \lceil k/2 \rceil}$.
\end{proof}

\begin{corollary}\label{cor.Q} Under the Hypotheses A, %of Theorem \ref{th.main}, 
the length/travel time integrable functions \hfill \break  $\big\{L_{\e_j}: \d_1^+(SM) \to \R\big\}_{0 \leq j \leq \lceil k/2\rceil}$ are able to detect the following quantities: 

$(1)$ the $k$-dimensional volume of the obstacle $K$, 

$(2)$ the integral $\mathsf Q_2(K) = \frac{1}{2}\int_K \kappa\, d\mu$, where $\kappa$ is the scalar curvature of $K$, 

$(3)$ for an even $k$, the following invariant $$\mathsf Q_k(K) = (2\pi)^{k/2} \chi(K) = (2\pi)^{k/2} \deg(\mathsf{Gauss}(\d T(K, \e)),$$ 
where $\chi(K)$ is the Euler number of $K$, and $\deg(\mathsf{Gauss}(\d T(K, \e))$ is the degree of the Gaussian map $\mathsf{Gauss}: \d \mathsf T(K, \e) \to S^{n-1}$ (defined by the exterior normals to $\d \mathsf T(K, \e)$).
\end{corollary}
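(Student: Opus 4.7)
The plan is to use Theorem \ref{th.main} as a black box: under Hypothesis A, it guarantees that the length functions $\{L_{\e_j}\}_{0\leq j\leq \lceil k/2\rceil}$ determine every coefficient of the polynomial $Q_K(\e^2)$, and hence every $\mathsf Q_\ell(K)$ for $\ell = 0, 2, \dots, 2\lceil k/2\rceil$. It therefore suffices to identify the three particular coefficients $\mathsf Q_0(K)$, $\mathsf Q_2(K)$, and (when $k$ is even) $\mathsf Q_k(K)$ with the geometric invariants claimed in (1)--(3).

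For (1) and (2) I would appeal directly to the identifications already recorded in the paragraph following Theorem \ref{We}. Item (1) is immediate: putting $\ell = 0$ in (\ref{c}) leaves an empty product, so $H_0 \equiv 1$ and $\mathsf Q_0(K) = \int_K \mu_K$ is the $k$-volume of $K$. For (2), the $\ell = 2$ case of (\ref{c}), combined with the antisymmetry of the Weyl tensor in its lower pair of indices, collapses to $H_2 = \frac{1}{2}\sum_{\a,\b} R^{\a\b}_{\a\b}$, which under the conversion (\ref{b}) is exactly $\frac{1}{2}\kappa$ in Weyl's sign convention; integrating then gives $\mathsf Q_2(K) = \frac{1}{2}\int_K \kappa\, d\mu$.

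For (3), with $k$ even, the top coefficient $\mathsf Q_k(K)$ is a Pfaffian of the curvature $2$-form of $K$; the first equality $\mathsf Q_k(K) = (2\pi)^{k/2}\chi(K)$ is the generalized Chern--Gauss--Bonnet theorem, which is quoted from \cite{Gr} immediately after Theorem \ref{We}. The second equality $\chi(K) = \deg(\mathsf{Gauss}(\d \mathsf T(K,\e)))$ I would derive from the classical Hopf--Kronecker theorem: the degree of the outward unit normal map of any smooth closed oriented hypersurface $\Sigma = \d \Omega \subset \E^n$ equals $\chi(\Omega)$. Applied to $\Sigma = \d \mathsf T(K,\e)$ bounding $\Omega = \mathsf T(K,\e)$, and using that $\mathsf T(K,\e)$ deformation-retracts onto $K$, this yields $\deg(\mathsf{Gauss}(\d \mathsf T(K,\e))) = \chi(\mathsf T(K,\e)) = \chi(K)$.

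The only step that is not a direct quotation is the normalization check for (2): tracing the alternating sum in (\ref{c}) with $\ell = 2$ through the index-raising convention (\ref{b}) back to the standard scalar curvature $\kappa$ requires careful sign-bookkeeping in Weyl's convention, although the final formula is exactly the one stated in the text. I expect this to be the main (and essentially the only) technical obstacle; everything else is a direct invocation of Theorem \ref{th.main}, Theorem \ref{We}, and the two standard topological theorems just cited.
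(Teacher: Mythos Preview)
Your proposal is correct and follows essentially the same route as the paper: invoke Theorem~\ref{th.main} to recover all the $\mathsf Q_\ell(K)$, then identify $\mathsf Q_0$, $\mathsf Q_2$, and $\mathsf Q_k$ via the geometric interpretations recorded after Theorem~\ref{We} and in \cite{We}, \cite{Gr}. The only difference is that you supply an explicit justification (Hopf--Kronecker plus the deformation retraction $\mathsf T(K,\e)\simeq K$) for the equality $\chi(K)=\deg(\mathsf{Gauss}(\partial\mathsf T(K,\e)))$, whereas the paper simply asserts it; this is a welcome elaboration, not a different argument.
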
 

 \begin{proof} All the claims follow from the geometric interpretations of the coefficients $\{\mathsf Q_\ell(K)\}_\ell$ in \cite{We}. In particular, in the case when $K$ is closed and $\dim K = k = 2q$, the leading coefficient $\mathsf Q_{2q}(K) = (2\pi)^q \chi(K)$, where $\chi(K)$ is the Euler characteristic of $K$ \cite{Gr}. This number coincides with the degree $(2\pi)^{k/2} \deg(\mathsf{Gauss}(\d \mathsf T(K, \e))$ of the Gaussian normal map times $(2\pi)^q$.
 \end{proof}
 
Since the Euler class $\chi(K)$ determines the genus of the closed connected orientable surface $K$,  Corollary \ref{cor.Q} implies the following claim.

\begin{corollary} Let $M$ be a compact domain in $\E^n$ with a smooth boundary, and $K$ is a  closed smooth connected $2$-dimensional surface in the interior of $M$. Then the scattering data $\big\{L_{\e_j}: \d_1^+(SM) \to \R_+\big\}_{0 \leq j \leq 1}$ detect the genus of $K$, provided that the Hypothesis A holds. \hfill $\diamondsuit$
 \end{corollary}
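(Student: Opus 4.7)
The plan is to specialize Corollary~\ref{cor.Q} to the case $k = \dim K = 2$ and then invoke the classification of closed surfaces. Since $\lceil k/2\rceil = 1$, Theorem~\ref{th.main} requires a pair of nested Weyl tubes $\mathsf T(K,\e_0) \supset \mathsf T(K,\e_1)$, and the two length/travel-time functions $L_{\e_0}$ and $L_{\e_1}$ provide exactly the scattering input demanded by the statement.

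First I would invoke Hypothesis~A at both scales $\e_0$ and $\e_1$ to apply the identity
\[
Q_K(\e^2) \;=\; \frac{1}{\e^{n-2}}\Big[\, vol(M) - \frac{1}{\s_{n-1}}\,\mathcal V(\e)\,\Big]
\]
established in the proof of Theorem~\ref{th.main}. Because $Q_K(\e^2)$ has degree $\lceil k/2\rceil = 1$ in $\e^2$, two evaluations pin its coefficients down uniquely and in particular recover its leading coefficient; by formula~(\ref{a}) this leading coefficient is a known non-zero scalar multiple of $\mathsf Q_2(K)$, so $\mathsf Q_2(K)$ is detected from $L_{\e_0}$ and $L_{\e_1}$.

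Next I would apply part~$(3)$ of Corollary~\ref{cor.Q}: for the even dimension $k = 2$ we have $\mathsf Q_2(K) = 2\pi\,\chi(K)$, so the Euler characteristic $\chi(K)$ is read off from the scattering data. Under the orientability convention implicit in the word ``genus", the classification of closed connected orientable surfaces gives $\chi(K) = 2 - 2g(K)$, whence $g(K) = (2 - \chi(K))/2$, which is the desired conclusion.

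I do not expect a genuine obstacle in this argument, since the analytic content is already packaged into Theorem~\ref{th.main} and Corollary~\ref{cor.Q}. The one point worth flagging explicitly is the orientability assumption: a non-orientable closed surface can share its Euler characteristic with an orientable one, and the Weyl coefficients $\{\mathsf Q_\ell(K)\}$ alone do not distinguish the two. In codimension one, i.e.\ $K \subset \E^3$, orientability is automatic, but for $n \geq 4$ it must either be assumed a priori or be deduced from auxiliary topological information about the embedding $K \hookrightarrow M$ before $\chi(K)$ can be converted unambiguously into a genus.
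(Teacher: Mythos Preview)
Your argument is correct and follows essentially the same route as the paper: invoke Corollary~\ref{cor.Q} (via Theorem~\ref{th.main}) to detect $\mathsf Q_2(K) = 2\pi\,\chi(K)$ from the two length functions, and then convert $\chi(K)$ into the genus via the classification of closed connected orientable surfaces. Your explicit flag on orientability is apt---the paper tacitly assumes it in the sentence preceding the corollary.
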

 
 Define now {\sf the average length} (travel time) of  non-trapped billiard trajectories in $N_\e$ which originate and terminate in $\d M$, with possible multiple reflections from $\d T(K, \e)$ in-between,  by
\begin{eqnarray}\label{eq.av_length_A} 
\mathcal L_\e^{\mathsf{av}}(M, K) :=  \frac{\int_{\big\{\d_1^+(SM) \setminus \mathsf{Trap}(\d M\leadsto,\, SN_\e)\big\}} L_\e \cdot \om_g^{n-1}}{\int_{\big\{\d_1^+(SM) \setminus \mathsf{Trap}(\d M\leadsto,\, SN_\e)\big\}} \;\om_g^{n-1}}.
\end{eqnarray}

\begin{theorem}\label{th.traping_volume} Let $K$ be a closed smooth $k$-dimensional Riemanian manifold, and $M$ is a smooth compact domain in the Euclidean space $\E^n$. Then, for any {\sf isometric} imbedding $\a: K  \hookrightarrow M \setminus T_\e(\d M)$, the trapping volume equals $$vol(\mathsf{Trap}(SN_\e)) := \int_{\big\{\mathsf{Trap}(SN_\e)\big\}} \b_g\wedge \om_g^{n-1}.$$ It depends continuously on $\e > 0$, on the average length  $\mathcal L_\e^{\mathsf{av}}(M, \a(K))$ of the non-trapped billiard trajectories in $N_\e(\a) := M \setminus T(\a(K), \e)$, and on the following three $\a$-independent\footnote{The quantity in {\bf (i)} is $K$-dependent, but, by Theorem \ref{We}, it is $\a$-independent.} volumes: \smallskip

{\bf (i)} $\int_{S\mathsf T(\a(K),\, \e)} \;\b_g \wedge \om_g^{n-1}$, {\bf (ii)} $\int_{\d_1^+(SM) } \;\om_g^{n-1}$,  {\bf(iii)} $\int_{SM} \;\b_g \wedge \om_g^{n-1}$. 

Moreover,  all these quantities together determine $vol(\mathsf{Trap}(SN_\e))$.
\end{theorem}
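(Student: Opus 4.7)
The plan is to combine the volume decomposition (\ref{eq.volumes}) with Stoyanov's formula (Theorem \ref{St}) and the definition (\ref{eq.av_length_A}) of the average length to express $\mathcal W(\e) = vol(\mathsf{Trap}(SN_\e))$ as a simple affine function of the four listed inputs; continuity and $\a$-independence then drop out of the resulting formula. First I would write the partition identity
$$vol(SM) \;=\; \mathcal V(\e) + \mathcal W(\e) + vol(S\mathsf T(\a(K),\e)),$$
which follows from $SM = SN_\e \sqcup S\mathsf T(\a(K),\e)$ together with $SN_\e = \mathsf{Trap}(SN_\e)\sqcup(SN_\e\setminus\mathsf{Trap}(SN_\e))$ and additivity of the Liouville measure. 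Theorem \ref{St} then rewrites $\mathcal V(\e)$ as the integral of the travel-time function $L_\e$ against $\om_g^{n-1}$ over $\d_1^+(SM)\setminus \mathsf{Trap}(\d M \leadsto, SN_\e)$.

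Next, since by \cite{LP} (Theorem 1.6.2, quoted at the start of Section 2) the set $\mathsf{Trap}(\d M \leadsto, SN_\e)$ has $\om_g^{n-1}$-measure zero, I would enlarge the domain of integration to all of $\d_1^+(SM)$ in both the numerator and the denominator of (\ref{eq.av_length_A}). This gives $\mathcal V(\e) = \mathcal L_\e^{\mathsf{av}}(M,\a(K))\cdot \int_{\d_1^+(SM)} \om_g^{n-1}$, and substituting back into the partition identity yields the single closed-form expression
$$\mathcal W(\e) \;=\; \underbrace{\int_{SM}\b_g\wedge\om_g^{n-1}}_{\text{(iii)}} \;-\; \mathcal L_\e^{\mathsf{av}}(M,\a(K))\cdot\underbrace{\int_{\d_1^+(SM)}\om_g^{n-1}}_{\text{(ii)}} \;-\; \underbrace{\int_{S\mathsf T(\a(K),\e)}\b_g\wedge\om_g^{n-1}}_{\text{(i)}}.$$
The ``moreover'' clause is an immediate consequence of this explicit formula.

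Finally I would verify the invariance and continuity assertions. Quantities (ii) and (iii) involve only $(M,g)$, so they are trivially $\a$-independent. For (i), Fubini along the unit sphere fibers gives $\int_{S\mathsf T(\a(K),\e)}\b_g\wedge\om_g^{n-1} = \s_{n-1}\cdot vol(\mathsf T(\a(K),\e))$, and Weyl's Theorem \ref{We} expresses the right-hand side as a polynomial $P_K(\e)$ whose coefficients are integrals over $K$ of intrinsic curvature invariants---hence the same for every isometric embedding $\a$, and continuous (in fact polynomial) in $\e$. Joint continuity of $\mathcal W(\e)$ in its four arguments is then immediate from the displayed formula, which is jointly affine; continuity in $\e$ alone reduces to continuity of $\mathcal L_\e^{\mathsf{av}}$ in $\e$, which is essentially the Stoyanov continuity result mentioned just after (\ref{eq.VOLUMES}).

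The main obstacle, as I see it, is not conceptual but a piece of careful bookkeeping: one must check that the measure-zero correction of the integration domain is applied consistently in both the numerator of $\mathcal V(\e)$ and the denominator of $\mathcal L_\e^{\mathsf{av}}$, which is exactly what allows the average length to factor cleanly out of Stoyanov's integral and produce the affine dependence claimed in the theorem.
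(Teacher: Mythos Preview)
Your proposal is correct and follows essentially the same route as the paper: both combine the volume decomposition (\ref{eq.volumes}) with Theorem \ref{St} and the measure-zero result from \cite{LP} to express $\mathcal W(\e)$ in terms of the four listed quantities, then invoke Weyl's Theorem \ref{We} plus Fubini to establish $\a$-independence of (i), and appeal to Stoyanov's continuity result for the dependence on $\e$. The only cosmetic difference is that you write out the closed-form affine expression for $\mathcal W(\e)$ explicitly and deduce continuity from it, whereas the paper cites \cite{St}, Theorem 1.2, directly for continuity of the trapping volume in the embedding.
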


\begin{proof} By  Theorem 1.6.2, \cite{LP}, the locus $\mathsf{Trap}(\d M \leadsto, SN_\e)$ has the Lesbegue measure zero. Thus the denominator of (\ref{eq.av_length_A}) equals $$\int_{\big\{\d_1^+(SM) \setminus \mathsf{Trap}(\d M\leadsto,\, SN_\e)\big\}} \;\om_g^{n-1} = \int_{\big\{\d_1^+(SM) \big\}} \;\om_g^{n-1},$$
and it does not depend on $\a$ and $K$. Therefore the numerator in  (\ref{eq.av_length}) depends only on the average length $\mathcal L_\e(\a(K))$.

On the other hand, by Theorem \ref{We}, the volume of an $\e$-tube $\mathsf T(\a(K), \e)$ depends only on the intrinsic geometry of $K$. Thus, by the Fubini Theorem, the volume of $S\mathsf T(\a(K), \e) \subset SM$ depends only on the intrinsic geometry of $K$ as well. 

Therefore, formula (\ref{eq.volumes}) implies that the trapping volume $vol(\mathsf{Trap}(SN_\e))$ depends only on the following four quantities:\smallskip 

{ \bf (1)} $\mathcal L_\e^{\mathsf{av}}(M, \a(K))$, {\bf (2)} $\int_{S\mathsf T(\a(K), \e)} \;\b_g \wedge \om_g^{n-1}$, {\bf (3)} $\int_{\d_1^+(SM) } \;\om_g^{n-1}$, and {\bf(4)} $\int_{SM} \;\b_g \wedge \om_g^{n-1}$. Here the first quantity is \emph{observable} and \emph{extrinsic} to $K$, the second is \emph{intrinsic} to $K$, and the last two quantities are $\a$- and $K$-\emph{independent}. %{\bf WORK}.

Finally, by \cite{St}, Theorem 1.2, the  volume $vol(\mathsf{Trap}(SN_\e))$ of the trapping region  depends continuously on the embedding $\d \mathsf T(K, \e) \hookrightarrow M$ and, therefore, on the embedding $K  \hookrightarrow M$ and on $\e$, provided that $\e >0$ is sufficiently small.
\end{proof}

\begin{example} \emph{Let $\a: K \hookrightarrow \E^3$ be a link or a knot. Of course, in such cases, $K$ has no interesting intrinsic geometry; but it has its total length $\ell(K)$. Then the polynomial
$$Q_K(\e^2)) =  \frac{1}{\e^{2}}\Big[vol(M) - \frac{1}{\s_{2}} \mathcal V(\e)\Big].$$
 equals to the constant $\pi \cdot \ell(K)$. This leads to %{\bf WORK} 
 a representation of the $\e$-trapping volume as a sum of one observable and two intrinsic  quantities:
 $$\mathcal W(\e) = - \int_{\big\{\d_1^+(SM) \setminus \mathsf{Trap}(\d M\leadsto,\, SN_\e)\big\}} L_\e \cdot \om_g \wedge \om_g - 4\pi \cdot vol(M) - 4\pi^2 \e^2 \cdot\ell(K).$$
 It seems that one can construct an example of a loop $K \subset \E^3$ such that $\mathcal W(\e) > 0$ for {\sf some} ``sufficiently big" $\e > 0$.
 However, if $\mathcal W(\e) =0$ for a small $\e >0$, then $\ell(K)$ is determined by the one-layered scattering as: 
 $$\ell(K) = \frac{1}{4\pi^2 \e^2}\big\{4\pi \cdot vol(M) - \mathcal V(\e)\big\}.$$
 \hfill $\diamondsuit$ }
\end{example}

\begin{corollary} %{\bf WORK} 
Let us adopt the notations of Theorem \ref{th.main}.
In particular, let $M$ be an $n$-dimensional smooth domain in $\E^n$, and $K$ is a compact closed smooth $k$-dimensional submanifold of $M$. %We pick a tube $T(K, \e)$ as in Theorem \cite{We}. 

Then there exists a constant $c(K)>0$, which depends only on the intrinsic geometry of $K$ (in particular, $c(K)$ is $\e$-independent and $M$-independent) and such, that for all sufficiently small  $\e > 0$  the average length/travel time $\mathcal L_\e^{\mathsf{av}}(M, K)$  of billiard trajectories in $N_\e$ (see (\ref{eq.av_length_A})) can detect the volume $$vol(\mathsf{Trap}(SN_\e)) := \int_{\mathsf{Trap}(SN_\e)} \b_g \wedge (\om_g)^{n-1}$$ of the trapping region with the precision $c(K)\cdot \e^{n-k}$.  %Moreover, there exists an $\e$-independent and $M$-independent constant $c(K)$ such that $\mathcal L_\e(M, K)$ determines $vol(\mathsf{Trap}(SN_\e))$ with the precision $c(K)\cdot \delta^{\frac{1}{n-k}}$. {\bf ????}
 %Given $\delta > 0$, for all sufficiently small positive $\e < \e_0(\delta)$,  the average length/travel time $\mathcal L_\e(M, K)$  of billiard trajectories in $N_\e$ (see (\ref{eq.av_length})) can detect the volume $$vol(\mathsf{Trap}(SN_\e)) := \int_{\mathsf{Trap}(SN_\e)} \b_g \wedge (\om_g)^{n-1}$$ of the trapping region with the precession $\delta$.  Moreover, there exists an $\e$-independent and $M$-independent constant $c(K)$ such that $\mathcal L_\e(M, K)$ determines $vol(\mathsf{Trap}(SN_\e))$ with the precision $c(K)\cdot \delta^{\frac{1}{n-k}}$. {\bf ????}
 \end{corollary}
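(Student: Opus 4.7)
The plan is to solve the volume identity (\ref{eq.volumes}) for $\mathcal W(\e)$ in terms of the observable average length $\mathcal L_\e^{\mathsf{av}}(M,K)$, and then to bound the remaining $K$-dependent correction intrinsically by $c(K)\cdot\e^{n-k}$ using Weyl's theorem. Rearranging (\ref{eq.volumes}) gives
\begin{equation*}
\mathcal W(\e)\;=\;vol(SM)\,-\,vol(S\mathsf T(K,\e))\,-\,\mathcal V(\e).
\end{equation*}

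First, I rewrite $\mathcal V(\e)$ as a scalar multiple of $\mathcal L_\e^{\mathsf{av}}$. Because $\mathsf{Trap}(\d M\leadsto,\,SN_\e)$ has Lebesgue measure zero (Theorem 1.6.2 of \cite{LP}), the denominator of (\ref{eq.av_length_A}) reduces to the $\e$- and $K$-independent constant $D:=\int_{\d_1^+(SM)}\om_g^{n-1}$, and Stoyanov's identity (\ref{eq1.1}) (Theorem \ref{St}) then yields
\begin{equation*}
\mathcal V(\e)\;=\;D\cdot\mathcal L_\e^{\mathsf{av}}(M,K).
\end{equation*}

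Next, I control $vol(S\mathsf T(K,\e))$ intrinsically. Fubini identifies this volume with $\s_{n-1}\cdot vol(\mathsf T(K,\e))$, and Weyl's Theorem \ref{We} writes $vol(\mathsf T(K,\e))=\e^{n-k}\cdot Q_K(\e^2)$, where $Q_K(u)$ is a polynomial of degree $\lceil k/2\rceil$ whose coefficients are the intrinsic scalars $\mathsf Q_\ell(K)$ rescaled by $vol(B^{n-k})$ and universal combinatorial factors. Fix any $\e_0\in(0,1]$ small enough that Weyl's expansion is valid on $(0,\e_0]$ and that the tube $\mathsf T(K,\e)$ remains embedded inside $M$; for all $\e\in(0,\e_0]$ this gives the crude estimate
\begin{equation*}
vol(\mathsf T(K,\e))\;\le\;\Bigl(\,vol(B^{n-k})\sum_{\ell\;\equiv\;0\,(\bmod 2)}\frac{|\mathsf Q_\ell(K)|}{(n-k+2)\cdots(n-k+\ell)}\Bigr)\cdot\e^{n-k}\;=:\;c_0(K)\cdot\e^{n-k},
\end{equation*}
so that $c_0(K)$ depends only on the Weyl invariants of $K$ together with the universal dimensional data.

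Combining the two steps, define the estimator
\begin{equation*}
\widehat{\mathcal W}(\e)\;:=\;vol(SM)\,-\,D\cdot\mathcal L_\e^{\mathsf{av}}(M,K),
\end{equation*}
assembled entirely from the observable $\mathcal L_\e^{\mathsf{av}}$ and from $K$-independent volumes of $M$ and $\d M$. The two previous steps yield
\begin{equation*}
\bigl|\mathcal W(\e)-\widehat{\mathcal W}(\e)\bigr|\;=\;vol(S\mathsf T(K,\e))\;\le\;\s_{n-1}\cdot c_0(K)\cdot\e^{n-k}\;=:\;c(K)\cdot\e^{n-k},
\end{equation*}
which is exactly the claimed precision. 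The only conceptually delicate point is verifying that the error constant $c(K)$ is genuinely intrinsic and $M$-independent; but this is precisely what Weyl's Theorem \ref{We} asserts for Euclidean tubes, so no new billiard-dynamical input beyond Stoyanov's formula and the measure-zero property of $\mathsf{Trap}(\d M\leadsto,\,SN_\e)$ is required.
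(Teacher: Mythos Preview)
Your proof is correct and follows essentially the same route as the paper's: both arguments rearrange the volume identity (\ref{eq.volumes}) (equivalently formula (\ref{eq.d})) so that the only unobservable term is $vol(S\mathsf T(K,\e))=\s_{n-1}\,\e^{n-k}Q_K(\e^2)$, and then bound $|Q_K(\e^2)|$ on $[0,1]$ by an intrinsic constant coming from Weyl's coefficients. Your write-up is somewhat more explicit---you name the estimator $\widehat{\mathcal W}(\e)$ and spell out $\mathcal V(\e)=D\cdot\mathcal L_\e^{\mathsf{av}}$---but the underlying argument is identical to the paper's one-line appeal to (\ref{eq.d}).
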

 
\begin{proof} The claim follows from the formula (\ref{eq.d}) by bounding from above the absolute value $|Q_K(\e^2)|$, say, in the interval $[0,1]$, provided $\e^2 < 1$. %and then picking a sufficiently small $\e_0 = \e_0(\delta) > 0$. 
Note that $c(K)$ depends only on the inner geometry of $K$ and thus it is shared by all isometric embeddings $\a: K \hookrightarrow M$. 
\end{proof}

%Let us consider now {\sf the average length/travel time} of the non-trapped billiard trajectories in $N_\e$ (that originate and terminate in $\d M$, with possible multiple reflections from $\d T(K, \e)$ in-between): 
%\begin{eqnarray}\label{eq.av_length}
%\mathcal L^{\mathsf{av}}_\e(M, K) :=  \frac{\int_{\big\{\d_1^+(SM) \setminus \mathsf{Trap}(\d M\leadsto,\, SN_\e)\big\}} L_\e \cdot \om_g^{n-1}}{\int_{\big\{\d_1^+(SM) \setminus \mathsf{Trap}(\d M\leadsto,\, SN_\e)\big\}} \;\om_g^{n-1}}.
%\end{eqnarray}

The next corollary shows that only the knowledge of the average lengths/travel times $\{\mathcal L^{\mathsf{av}}_{\e_j}(M, K)\}_{0 \leq j \leq \lceil k/2\rceil}$ is needed in order to determine the global Weyl invariants of $K$, provided that the Hypothesis A is valid. 

\begin{corollary}\label{cor.av_length} %{\bf WORK} 
Let $K$ be a closed smooth $k$-dimensional Riemanian manifold and $M$ is a smooth compact domain in the Euclidean space $\E^n$. Then, assuming the Hypothesis A, the average length (travel time) $\mathcal L^{\mathsf{av}}_\e(M, K)$ of the non-trapped billiard trajectories in $N_\e$ which originate and terminate in $\d M$ (with possible multiple reflections from $\d \mathsf T(K, \e)$ in-between) is shared by all {\sf isometric} embeddings 
 $\a: K  \hookrightarrow M \setminus \mathsf T_\e(\d M)$. % the trapping volume $$vol(\mathsf{Trap}(SN_\e)) := \int_{\big\{\mathsf{Trap}(SN_\e)\big\}} \b_g\wedge \om_g^{n-1},$$ depends continuously on $\e > 0$, the average length  $\mathcal L_\e(M, \a(K))$ of the non-trapped billiard trajectories in $N_\e(\a) := M \setminus T(\a(K), \e)$, and the following three $\a$-independent volumes: 
  In fact, 
 \begin{eqnarray}\label{eq.av_length}
 \mathcal L^{\mathsf{av}}_\e(M, K) = \frac{\int_{SM} \;\b_g \wedge \om_g^{n-1} - \int_{S\mathsf T(\a(K),\, \e)} \;\b_g \wedge \om_g^{n-1}}{\int_{\d_1^+(SM) } \;\om_g^{n-1}},
 \end{eqnarray}
 where the  $\a(K)$-dependent quantity $\int_{S\mathsf T(\a(K),\, \e)} \;\b_g \wedge \om_g^{n-1}$ depends in fact only on the intrinsic geometry of $K$.
%{\bf (i)} $\int_{S\mathsf T(\a(K), \e)} \;\b_g \wedge \om_g^{n-1}$, {\bf (ii)} $\int_{\d_1^+(SM) } \;\om_g^{n-1}$,  {\bf(iii)} $\int_{SM} \;\b_g \wedge \om_g^{n-1}$. 
%Moreover, together all these quantities determine $vol(\mathsf{Trap}(SN_\e))$.

Moreover, for any sequence $\e_0 > \e_1 > \dots > \e_{\lceil k/2\rceil} > 0$, where $\e_0$ is sufficiently small, the averages of lengths/travel times $\{\mathcal L^{\mathsf{av}}_{\e_j}(M, K)\}_{0 \leq j \leq \lceil k/2\rceil}$ of the non-trapped billiard trajectories in $\{N_{\e_j}\}_{0 \leq j \leq \lceil k/2\rceil}$, which originate and terminate in $\d M$, determine the intrinsic global invariants $$\Big\{\mathsf Q_\ell := \int_K H_\ell \; \mu_K\Big\}_{0 \leq \ell \leq \lceil k/2 \rceil}$$ of the Riemannian manifold $K$, defined by the formulae (\ref{b})-(\ref{c}). 
\end{corollary}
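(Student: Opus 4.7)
The plan is to derive the closed form (\ref{eq.av_length}) for $\mathcal L^{\mathsf{av}}_\e(M, K)$ by combining Stoyanov's identity (\ref{eq1.1}) with Hypothesis A, and then to recover the Weyl invariants by inverting a Vandermonde-type system built from measurements at $\lceil k/2\rceil + 1$ distinct radii $\e_j$. First I would observe that the denominator in (\ref{eq.av_length_A}) is independent of both $K$ and the isometric embedding $\a$: by Theorem 1.6.2 of \cite{LP}, the set $\mathsf{Trap}(\d M \leadsto,\, SN_\e)$ has Lebesgue measure zero, so removing it does not affect the integral of the smooth form $\om_g^{n-1}$, and what remains is $\int_{\d_1^+(SM)} \om_g^{n-1}$, a quantity determined entirely by $(M, g)$.

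Next I would turn to the numerator. Under Hypothesis A the shadow volume $\mathcal W(\e)$ vanishes, so relation (\ref{eq.volumes}) collapses to $\mathcal V(\e) = vol(SM) - vol(S\mathsf T(\a(K), \e))$. Combined with Stoyanov's identity this says that the numerator of $\mathcal L^{\mathsf{av}}_\e$ equals $vol(SM) - vol(S\mathsf T(\a(K), \e))$, and division yields the asserted formula (\ref{eq.av_length}). That $\mathcal L^{\mathsf{av}}_\e(M, K)$ is shared by all isometric embeddings $\a$ then follows because the terms involving $SM$ and $\d_1^+(SM)$ are manifestly $\a$-independent, while Fubini gives $vol(S\mathsf T(\a(K), \e)) = \s_{n-1} \cdot vol(\mathsf T(\a(K), \e))$ and Weyl's Theorem \ref{We} shows that the tube volume on the right depends only on the intrinsic Riemannian structure of $K$.

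For the final claim I would substitute the Weyl polynomial expansion $vol(\mathsf T(\a(K), \e)) = \e^{n-k} Q_K(\e^2)$ into the formula just derived, rearranging to express $Q_K(\e^2)$ as an affine function of the observable $\mathcal L^{\mathsf{av}}_\e(M, K)$ with $\e$-dependent but explicit coefficients. Evaluating at $\e = \e_0, \e_1, \ldots, \e_{\lceil k/2\rceil}$ produces a Vandermonde linear system in the unknowns $\mathsf Q_0(K), \mathsf Q_2(K), \ldots, \mathsf Q_{2\lceil k/2\rceil}(K)$, whose invertibility is the standard nonvanishing of Vandermonde determinants at distinct nodes $\e_j^2$. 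Hence the family of averages determines all the Weyl invariants in the stated range. The main obstacle is more cosmetic than substantive: Theorem \ref{th.main} already extracts $Q_K$ from the richer data $L_\e$, and the point here is the observation that, under Hypothesis A, the right-hand side of Stoyanov's identity is a \emph{single number} per $\e$, so the average loses no information relevant to the reconstruction of $Q_K$.
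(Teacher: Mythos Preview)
Your proposal is correct and follows essentially the same route as the paper: both use the Lax--Phillips measure-zero result to simplify the denominator, combine Hypothesis~A with (\ref{eq.volumes}) and Stoyanov's identity to identify the numerator as $vol(SM) - vol(S\mathsf T(\a(K),\e))$, and then invoke Weyl's theorem via Fubini for the intrinsic dependence. The only cosmetic difference is that for the last claim the paper simply cites Theorem~\ref{th.main}, whereas you spell out the Vandermonde interpolation explicitly---which is exactly the content of that theorem's proof.
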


\begin{proof} By Theorem 1.6.2, \cite{LP},  the locus $\mathsf{Trap}(\d M \leadsto, SN_\e)$ has the Lesbegue measure zero. Thus the denominator of (\ref{eq.av_length}) equals $$\int_{\big\{\d_1^+(SM) \setminus \mathsf{Trap}(\d M\leadsto,\, SN_\e)\big\}} \;\om_g^{n-1} = \int_{\big\{\d_1^+(SM) \big\}} \;\om_g^{n-1},$$
and it is a quantity that does not depend on $\a$ and $K$. In fact, by the Fubini Theorem, $\int_{\big\{\d_1^+(SM) \big\}} \;\om_g^{n-1} = \theta_{n-1}\cdot vol(\d M)$, where $\theta_{n-1}$ denotes the volume of the unit $(n-1)$-ball. Therefore, the numerator in  (\ref{eq.av_length}) depends on the average length $\mathcal L^{\mathsf{av}}_\e(\a(K))$ and $vol(\d M)$.

On the other hand, by Theorem \ref{We}, the volume of the $\e$-tube $\mathsf T(\a(K), \e)$ depends only on the intrinsic geometry of $K$. Thus, by the Fubini Theorem, the volume of $S\mathsf T(\a(K), \e) \subset SM$ depends only on the intrinsic geometry of $K$ as well. 

Therefore, the formula (\ref{eq.volumes}) implies (\ref{eq.av_length}). %that the trapping volume $vol(\mathsf{Trap}(SN_\e))$ depends only on: { \bf (1)} $\mathcal L_\e(M, \a(K))$, {\bf (2)} $\int_{S\mathsf T(\a(K), \e)} \;\b_g \wedge \om_g^{n-1}$, {\bf (3)} $\int_{\d_1^+(SM) } \;\om_g^{n-1}$, and {\bf(4)} $\int_{SM} \;\b_g \wedge \om_g^{n-1}$. 
Note that  $\mathcal L^{\mathsf{av}}_\e(M, K)$  is an \emph{observable} quantity.% and \emph{extrinsic} to $K$, the second is \emph{intrinsic} to $K$, and the last two quantities are $\a$- and $K$-\emph{independent}. %{\bf WORK}.
%
%Finally, by \cite{St}, Theorem 1.2, the  volume $vol(\mathsf{Trap}(SN_\e))$ of the trapping region  depends continuously on the embedding $\d \mathsf T(K, \e) \hookrightarrow M$ and therefore on the embedding $K  \hookrightarrow M$ and $\e$, provided that $\e >0$ is sufficiently small.

The last claim follows from Theorem \ref{th.main}.
\end{proof}

%\begin{example} \emph{Let $\a: K \hookrightarrow \E^3$ be a link or a knot. Of course, $K$ has no interesting intrinsic geometry, but it has its total length $\ell(K)$. Then the polynomial
%$$Q_K(\e^2)) =  \frac{1}{\e^{2}}\Big[vol(M) - \frac{1}{\s_{2}} \mathcal V(\e)\Big].$$
 %is the constant $\pi \cdot \ell(K)$. This leads to %{\bf WORK} 
 %an equation whose RHS is a sum of one observable and two intrinsic  quantities:
 %$$-\mathcal W(\e) = 0 =  \int_{\big\{\d_1^+(SM) \setminus \mathsf{Trap}(\d M\leadsto,\, SN_\e)\big\}} L_\e \cdot \om_g \wedge \om_g + 4\pi \cdot vol(M) + 4\pi^2 \e^2 \cdot\ell(K).$$
 %It seems that one can construct an example of a loop $K \subset \E^3$ such that $\mathcal W(\e) > 0$ for {\sf some} ``sufficiently big" $\e > 0$.
 %However, for sufficiently small $\e >0$, $\ell(K)$ is determined by the one-layered scattering: 
 %$$\ell(K) = \frac{1}{4\pi^2 \e^2}\big\{4\pi \cdot vol(M) - \mathcal V(\e)\big\}.$$
 %\hfill $\diamondsuit$ }
%\end{example}

\section{Approximating Weyl tubes by tamed bubbling tubes}

As before, we assume that the obstacle $K \subset M \subset \E^n$ is a closed smooth $k$-dimensional manifold and $M$ is a compact domain with a smooth boundary $\d M$. We assume that $\mathsf T(K,\e) \to K$ is a fibration whose fibers are the $(n-k)$-dimensional $\e$-balls, orthogonal to $K$. Thus the Weyl tube $\mathsf T(K,\e) \subset M$ is the union of all $\e$-balls, whose centers belong to $K$ (we assume that all $\e$ are such that $\mathsf{dist}(K, \d M) > \e$).

We do not know whether Hypothesis A generally holds. Therefore, we need to change the notion of a tube so that this hypothesis will become valid for the modified tubes. 

Let us adopt a definition from the paper of Burago, Ferleger, and  Kononenko \cite{BFK}.

\begin{definition}\label{def.nondegenerate} Let $\mathcal U$ be a finite collection of convex domains $\{B_i\}_{i \in [1, m]}$ in a smooth Riemmanian manifold $(M, g)$. Let $B :=  \bigcup_{i=1}^m B_i$. 

Then $\mathcal U$ is called {\sf non-degenerate in an open set} $U \subset M$, if for any multi-index $I \subset \{1, 2, \ldots, m\}$ and any $y \in (U \cap  B) \setminus \bigcap_{i \in I} B_i$,
$$\max_{k \in I}\; \Big\{\frac{\mathsf{dist}(y, B_k)}{\mathsf{dist}(y,\, \bigcap_{j \in I}B_j)}\Big\} \geq c,\; \text{ for a constant } c >0.$$
 
$\mathcal U$ is called {\sf non-degenerate}, if there exist $\delta >0$ and constant $c > 0$ so that $\mathcal U$ is non-degenerate with the constant $c >0$ in any $\delta$-ball. \hfill $\diamondsuit$
\end{definition}

Generally speaking, this means that if a point in $M \setminus U$ is $d$-close to all the walls $\{\d B_i\}_{i \in I}$ then it is $(d/c)$-close to their intersection.

\begin{definition}\label{def.bubbling} 
A {\sf tamed bubbling tube} $\mathbf T(K, \e)$ is a finite union $\bigcup_{j=1}^q B_\e(x_j)$ of closed $\e$-balls, whose distinct centers $x_j \in K$ and which satisfy the following properties: 
\begin{itemize}
\item $\mathbf T(K, \e) \cap K = K$,
\item the system of balls $\{B_\e(x_j)\}_{j \in [1, q]}$ is nondegenerate in the sense of Definition \ref{def.nondegenerate}, 
\item any two intersecting spheres $\d B_\e(x_j)$ and $\d B_\e(x_l)$ intersect transversally.
\end{itemize}

We call the number $\rho(\mathbf T(K, \e)) =_{\mathsf{def}} 1- \frac{vol(\mathbf T(K, \e))}{vol(\mathsf T(K, \e))} \in [0,1)$ the {\sf roughness} of the tamed bubbling tube $\mathbf T(K, \e)$. The bigger  $\rho(\mathbf T(K, \e))$ is, the ``rougher" is the tube. \smallskip
%Put $\delta(\mathbf T(K, \e)) := \rho(\mathbf T(K, \e))\cdot vol(\mathsf T(K, \e))$. 
\hfill $\diamondsuit$
\end{definition}

Clearly, by adding new $\e$-balls with the centers in $K$, one can increase the volume  $vol(\mathbf T(K, \e)) \leq vol(\mathsf T(K, \e))$, while keeping new system of balls nondegenerate (see Fig.4). %Thus, it is possible to choose an arbitrary ``smooth" bubbling tube (see Fig.3).  \smallskip

In what follows, we replace the original scattering problem for the manifold $N_\e := M \setminus \mathsf T(K, \e)$ with a similar scattering problem for $\mathbf N_{\e} =_{\mathsf{def}} M \setminus \mathbf T(K, \e)$. The main advantage of this replacement is that the billiard in $\mathbf N_{\e}$ is {\sf dispersing}, and therefore, has zero non-trapping volume (see Lemma \ref{lem.ZERO}). This conclusion is in line with the results of, i.g.,  \cite{BFK} which claim, in particular, that the trajectories of nondegenerate (see Definition \ref{def.nondegenerate}) semi-dispercing billiards have a finite number of collisions with the boundary, before escaping to infinity. 

At the same time, we can control the volume $vol(\mathbf T(K, \e))$, in terms of the intrinsic geometry of $K$, with accuracy that depends on the roughness $\rho(\mathbf T(K, \e))$.

Following \cite{BFK}, we declare any point of the intersection $\d B_\e(x_i) \cap \d B_\e(x_j)$, $i\neq j$, reached by a billiard trajectory $\g$ in $\mathbf N_{\e}$, to be \emph{the terminal point} on $\g$. Since such multiple intersections form a measure zero set in $\d \mathbf T(K, \e)$, the measure of the points in $S\mathbf N_{\e}$, visited by the lifts $\tilde \g$ of such $\g$, vanishes as well. Therefore, their contributions vanish in all the integral formulas to follow.

\begin{figure}[ht]
\centerline{\includegraphics[height=1.6in,width=3.5in]{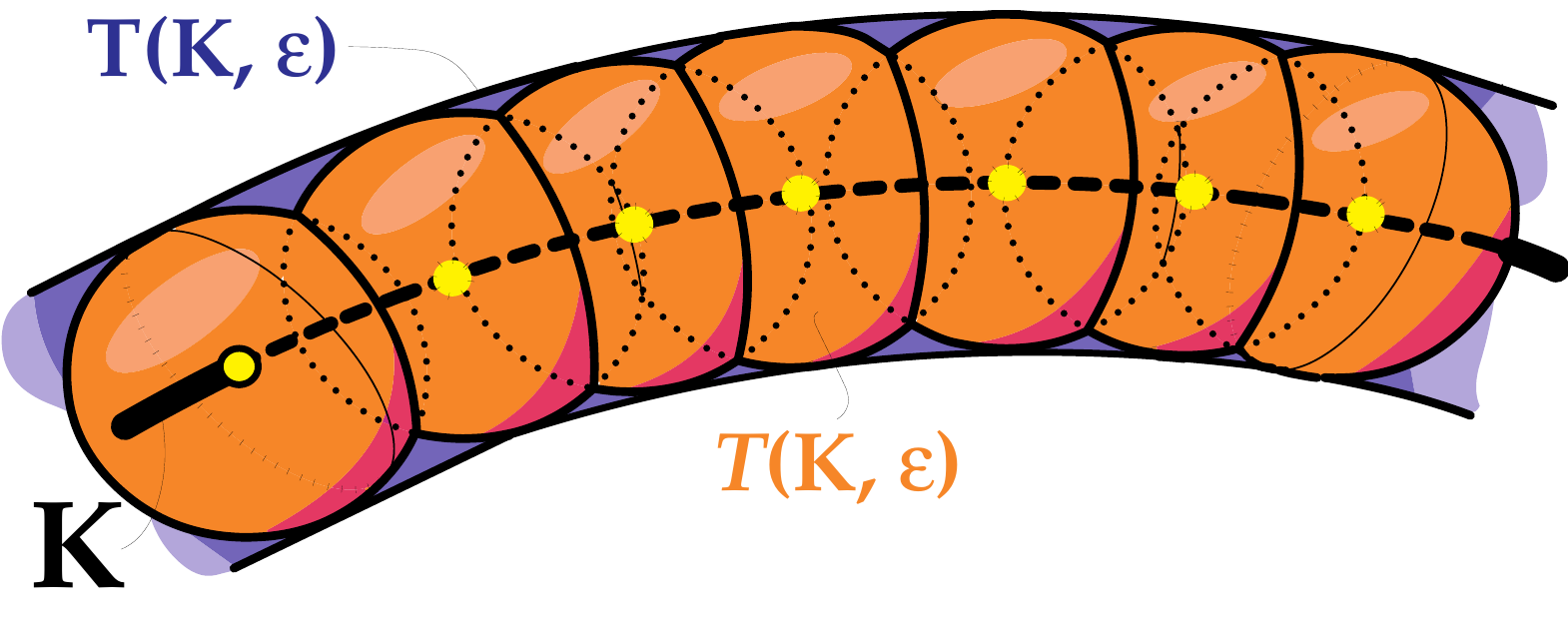}}\label{bill}
\bigskip
\caption{\small{Approximating a Weyl tube $\mathsf T(K, \e)$ by a tame bubbling tube $\mathbf T(K, \e)$.}} 
\end{figure}

\begin{lemma}\label{lem.ZERO} Let $\mathbf T(K, \e) \subset M$ be a tame bubbling tube. Consider  the billiard with the table $\mathbf N_\e := M \setminus \mathbf T(K, \e)$. Then the measure of the trapping set $\mathsf{Trap}(S\mathbf N_\e) \subset S\mathbf N_\e$ is zero.
\end{lemma}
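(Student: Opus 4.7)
The plan is to invoke the Burago--Ferleger--Kononenko uniform bound on the number of collisions in a nondegenerate semi-dispersing billiard, and to combine it with the non-trapping of $(M,g)$ to conclude that every regular orbit reaches $\partial M$ in finite time, leaving $\mathsf{Trap}(S\mathbf N_\e)$ inside a measure-zero singular locus.

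First I would observe that each wall $\partial B_\e(x_j)$ is a strictly convex sphere as viewed from $\mathbf N_\e$, so the billiard in $\mathbf N_\e$ is semi-dispersing. By Definition \ref{def.bubbling}, the collection $\{B_\e(x_j)\}_{j=1}^{q}$ is nondegenerate in the sense of Definition \ref{def.nondegenerate}. The main theorem of \cite{BFK} then supplies a constant $N = N(c, n)$, depending only on the nondegeneracy constant $c$ and the dimension $n$, such that every regular billiard trajectory in $\mathbf N_\e$ undergoes at most $N$ reflections from $\partial \mathbf T(K,\e)$ before becoming a free geodesic.

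Next I would fix a regular $(m,v) \in S\mathbf N_\e$ and run the forward billiard flow. After at most $N$ collisions the orbit continues as an unobstructed geodesic segment in $\mathbf N_\e$; the non-trapping hypothesis on $g$ then forces this segment to hit $\partial M$ in finite positive time. Hence every regular initial datum escapes, and $\mathsf{Trap}(S\mathbf N_\e)$ is contained in the singular locus $\mathsf{Sing} \subset S\mathbf N_\e$ consisting of orbits whose forward evolution either becomes tangent to some $\partial B_\e(x_j)$ or enters a double intersection $\partial B_\e(x_i) \cap \partial B_\e(x_j)$ (declared terminal by the convention preceding the lemma).

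Finally I would show $\mu(\mathsf{Sing}) = 0$. The transversality clause in Definition \ref{def.bubbling} makes each $\partial B_\e(x_i) \cap \partial B_\e(x_j)$ a codimension-$2$ submanifold of $M$, and the tangency locus in $\partial(S\mathbf T(K,\e))$ has codimension at least one there. Pulling these strata back through the at most $N$ iterations of the smooth reflection involution $\tau$ and of the geodesic flow $\phi^t_g$, both of which preserve the Liouville measure $\mu$, produces a finite union of $\mu$-null sets containing $\mathsf{Sing}$, which yields the lemma. The main obstacle is the very first step: one must use the nondegeneracy constant $c$ to guarantee that the BFK bound holds uniformly in the initial data, since without it a trajectory could in principle bounce arbitrarily many times inside a shrinking pocket between two nearly tangent spheres, exactly the pathology the definition of a tamed bubbling tube was designed to exclude.
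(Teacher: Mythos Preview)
Your proposal is correct but follows a genuinely different route from the paper.

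The paper's proof does \emph{not} invoke the BFK collision bound directly. Instead it embeds the bubbling tube in a large cube $Q\supset M$, identifies opposite faces to obtain a flat torus $T^n$, and studies the dispersing billiard $\mathbf P(K,\e)=T^n\setminus\mathsf{int}(\mathbf T(K,\e))$. The core of the argument is then \emph{ergodicity}: using the Sinai--Chernov and Kr\'amli--Sim\'anyi--Sz\'asz machinery for dispersing billiards with corner singularities, the paper argues that $\mathbf P(K,\e)$ is ergodic. The conclusion follows by contradiction: if $\mathsf{Trap}(S\mathbf N_\e)$ had positive Liouville measure, its lift would be a nontrivial invariant set of positive but not full measure for the torus billiard, violating ergodicity.

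Your argument is more direct and arguably more elementary: you use exactly the quantitative input (a uniform collision bound) that the nondegeneracy hypothesis in Definition~\ref{def.bubbling} was tailored to supply, and then the non-trapping of straight lines in $\E^n$ finishes the job. This bypasses the substantial technicalities of establishing ergodicity for billiards with corner singularities. Amusingly, the paper itself notes just before the lemma that the conclusion is ``in line with'' \cite{BFK}, so you have carried out the approach the authors gesture at but do not pursue. What the paper's route buys in return is extra dynamical information---ergodicity of the associated torus billiard---and it offloads the handling of the singular set onto the ergodic-theoretic machinery rather than tracking it by hand through finitely many pullbacks as you do.
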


\begin{proof} The set $\d\mathbf T(K, \e) \subset \E^n$ consists of pieces of $(n-1)$-spheres in $\E^n$. Each such piece is a manifold with corners. Indeed, it is possible to choose a tame bubbling tube so that any group of spheres intersect transversally (so that Definition \ref{def.nondegenerate} is satisfied).  %{\bf REVISIT !!}

Without loss of generality, we may also assume that $\mathbf T(K, \e)$ is contained inside of a $n$-cube $Q \subset \E^n$, whose interior contains $M$. We view this cube as the fundamental domain for a flat $n$-torus $T^n$.  Thus, we may assume that $\mathbf T(K, \e)$ belongs to $T^n$. Consider now the billiard on the table $\mathbf P(K, \e) := T^n \setminus \mathsf{int}(\mathbf T(K, \e))$. 

The billiard $\mathbf P(K, \e)$ is {\sf dispersing}, i.e., each boundary component is convex inwards. Indeed, $\d\mathbf P(K, \e)$ consists of  pieces of spheres. However, it is not a Sinai billiard, because the boundary has singularities, which consists of the sets where at least two spheres intersect. Such singularities in the theory of billiards are called {\sf corner singularities}. Thanks to transversality of such intersections and to the fact that the number of spheres in $\d\mathbf T(K, \e)$ is finite, all the angles between the intersecting spheres exceed some value $\a >0$.

Actually, there are two types of singularities in our billiards $\mathbf P(K, \e)$. Only one type of singularities, $\Sigma^{\mathsf{tan}} \subset \d S\mathbf P(K, \e)$, the {tangent singularities}, is present in Sinai billiards, where the boundary is dispersing (convex inwards) and smooth. Note that $\dim \Sigma^{\mathsf{tan}} = 2n-3$. However, in billiards we consider, the boundary itself has the {\sf corner singularities} $\Sigma^{\vee} \subset \d S\mathbf P(K, \e)$, also of dimension $2n-3$. It is well known that ergodic properties of billiards are essentially determined by the time evolution of the singular set $\Sigma =_{\mathsf{def}} \Sigma^{\mathsf{tan}} \bigcup \Sigma^{\vee}$ under the iterations of the billiard map $\mathcal B := \mathcal B(K, \e): \d_1^+S\mathbf P(K, \e) \to \d_1^+S\mathbf P(K, \e)$. Thus, by the $\mathcal B$-invariant set $\bigcup_{i=-\infty}^{+\infty}\mathcal B^{(i)}(\Sigma)$ is everywhere dense in $\d_1^+S\mathbf P(K, \e)$ \cite{CM}.

Recall that Sinai billiards are ergodic \cite{Si}, \cite{KSS}.  For dispersing billiards,  
their ergodicity can be derived from, e.g., \cite{KSS}. We will make several comments in order to explain why  the billiards in the class $\mathbf P(K, \e)$ are ergodic. 

The main fact is that the images under positive and negative iterations of the billiard map $\mathcal B$ of any smooth piece of the singularity set $\Sigma$ do not locally coincide with %(in fact, is transversal to) 
any other piece of the singularity set \cite{CM}, \cite{SiC}, \cite{KSS}.  The last means that corresponding sets intersect transversally.
Indeed, consider the, so-called, {\sf wave fronts} (see \cite{CM}), i.e., local (small) manifolds, orthogonal to narrow beams of rays (billiard orbits). 

%{\bf NEW TEXT} 
A smooth portion of the complement to the singularity set $\Sigma$ can be smoothly foliated by such strictly convex future wave  fronts (i.e., having a strictly positive curvature tensor).  
One just needs to perturb a unit velocity vector arbitrarily (the perturbation is again a unitary vector field), so that the foot points of the perturbed vectors remain the same.  In other words, we consider a beam of rays emanating from a point in a billiard table. 
Because of convexity of the boundary, these strictly convex wave fronts remain strictly convex under billiard dynamics \cite{CM}. 

Tamed bubbling tubes have strictly dispersing billiards with only corner-type singularities. In addition to that, at the corners the opening spatial angles between the spheres forming such tubes are always positive. There are also singularities arising because of tangencies of billiard orbits to the boundary of the tamed bubbling tubes. However, such singularities are not dangerous for our purposes \cite{BS}. Recall also that in dispersing billiards the set of orbits hitting (going through) singularities has zero measure.
 However, the corner singularities must be studied separately. One need to show that a future image of a singularity does not exactly coincide (not even locally) with a singularity.  The reason is that these singularities can be smoothly foliated with strictly convex future wave fronts, i. e. with strictly positive curvature tensor. The construction of these wave fronts is the so-called "candle" construction (\cite{KSS}).  One just needs to perturb only the unit velocity vector arbitrarily, but so that it stays unit, while the footpoint of the velocity vector stays fixed. Because the billiard is strictly dispersing, these strictly convex wave fronts stay strictly convex in the future.
Similarly, the backward images of singularities can be foliated by  the "inverse candles", i.e. strictly concave wave fronts with strictly negative definite curvature form.
These strictly convex and concave fronts are transversal to each other at any intersection point,
and ergodicity follows (\cite{SiC}).%(Sinai, Chernov 1987: KSS).

The ergodicity of the billiard maps $\mathcal B(K, \e)$ follows, since, except for a set of zero measure and topological codimension at least two, there is a {\sf hyperbolic structure} in the complement to the singular set and its iterations in $\d_1^+S\mathbf P(K, \e)$, that is, local stable and unstable manifolds do exist almost everywhere. For such points, the conditions of the theorem on local ergodicity \cite{SiC} are satisfied. Recall that a dynamical system is called {\sf locally ergodic} if for almost every point there exists a neighborhood which belongs to one ergodic component.

A {\sf global ergodicity}, i.e., the existence of a single ergodic component of full measure, also follows. Indeed,  observe that the set of points, which satisfy the local ergodicity, is linearly connected and is at least of codimension two. %\smallskip

Finally, contrary to the lemma claim, assume that the invariant set $A := \mathsf{Trap}(S\mathbf N_\e)$ has a positive measure. Then  $A$ is an invariant positive measure set in the spherical fibration $SQ \to Q$ over the cube $Q \supset M \supset \mathbf N_\e$. The set $A$ is invariant  under the factor map $S\E^n \to ST^n$ and clearly it is not of a full measure, since there are open families of lines through points of $M$ that miss $\mathbf P(K, \e) \subset \mathsf{int}(M)$.
On the other hand,  a strictly dispersing billiard $\mathbf P(K, \e)$ on a torus $T^n$ with scatterers $\d\mathbf T(K, \e)$ is ergodic \cite{Si}, \cite{SiC}. Thus we came to a contradiction, which proves  lemma. 
%
% and a large cube $Q$ (as above) that contains $\mathbf P(K, \e)$. This way we obtain a strictly dispersing billiard on a torus $T^n$ with  scatterers, which is known to be ergodic \cite{SC}.   However, here the original positively measured set $A$ stays the same under the factor map $\E^n \to T^n$ and clearly is not of full measure. Thus we came to a contradiction. 
%%%%%%%%%%%
%Another way to show ergodicity is to assume that there is an invariant positive measure set $A$ and a large cube $Q$ (as above) that contains $\mathbf P(K, \e)$. This way we obtain a strictly dispersing billiard on a torus $T^n$ with  scatterers, which is known to be ergodic \cite{SC}.   However, here the original positively measured set $A$ stays the same under the factor map $\E^n \to T^n$ and clearly is not of full measure. Thus we came to a contradiction. 
%%%%%%%%%%%%%%% 
%It follows that if the measure of $\mathsf{Trap}(S\mathbf P(K, \e))$ is positive, then so is the measure of $\mathsf{Trap}(S\mathbf N(K, \e))$ {\bf WORK}
\end{proof}

\begin{remark}\emph{In our billiard $\mathbf P(K, \e)$, the scatters are pieces of spheres. Therefore, they are real semi-algebraic sets, and thus the results of  \cite{SiC}, \cite{KSS} apply directly.} \hfill $\diamondsuit$
 \end{remark}

\begin{theorem}\label{th.main_bubbling} Let $K$ be a closed smooth $k$-dimensional submanifold of the Euclidean space $\E^n$. Consider a sequence $\e_0 > \e_1 > \dots > \e_{\lceil k/2\rceil} > 0$, where $\e_0$ is sufficiently small. %, and a sequence $\delta_0, \delta_1,  \dots, \delta_{\lceil k/2\rceil}$ of nonnegative numbers.
 Let $\{\mathbf T(K, \e_j)\}_j$ be tame bubbling tubes, contained in a smooth compact domain $M \subset \E^n$ so that $K$ is at least $\e_0$-away from $\d M$. %Put $\rho_j := \rho(\mathbf T(K, \e_j))$. %and $\delta_j := \delta(\mathbf T(K, \e_j))$.
 Consider the domains $\{\mathbf N_{\e_j} =_{\mathsf{def}} M \setminus \mathsf{int}(\mathbf T(K, \e_j))\}_j$. 
%\smallskip
 
Then, the averages $\{\mathbf L^{\mathsf{av}}_{\e_j}\}_{0 \leq j \leq \lceil k/2\rceil}$ of lengths (travel times) functions  $$\big\{\mathbf L_{\e_j}: \d_1^+(SM) \to \R\big\}_{0 \leq j \leq \lceil k/2\rceil}$$ of the billiard trajectories (which originate and terminate in $\d M$) in the domains $\{\mathbf N_{\e_j}\}_{0 \leq j \leq \lceil k/2\rceil}$, together with the roughness coefficients $\{\rho_j:= \rho(\mathbf T(K, \e_j))\}_{0 \leq j \leq \lceil k/2\rceil}$ of the tame bubbling tubes, determine intrinsic global invariants $$\Big\{\mathsf Q_\ell := \int_K H_\ell \; \mu_K\Big\}_{0 \leq \ell \leq \lceil k/2 \rceil}$$ of the Riemannian manifold $K$, defined by the formulae (\ref{b})-(\ref{c}). %{\bf WORK} , with the accuracy $BLA = ????$. 
\end{theorem}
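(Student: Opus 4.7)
The plan is to mirror the proof of Theorem \ref{th.main}, with the bubbling tubes $\mathbf T(K,\e_j)$ replacing the Weyl tubes and with Lemma \ref{lem.ZERO} playing the role of Hypothesis A. Fixing $\e = \e_j$ and writing $\mathbf N_\e := M \setminus \mathsf{int}(\mathbf T(K,\e))$, I would first invoke Lemma \ref{lem.ZERO} to conclude $\mu(\mathsf{Trap}(S\mathbf N_\e)) = 0$. Then I would argue that Stoyanov's identity (Theorem \ref{St}) extends to the bubbling setting: although $\d\mathbf T(K,\e)$ is only piecewise smooth, its corner stratum has codimension two in $\d\mathbf T(K,\e)$ and its lift to $S\mathbf N_\e$ is $\mu$-null, so orbits terminating at corners contribute nothing to the relevant integrals (as noted in the paragraph preceding Lemma \ref{lem.ZERO}). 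This yields
\begin{equation}
\int_{\d_1^+(SM)} \mathbf L_\e \cdot \om_g^{n-1} \;=\; \int_{S\mathbf N_\e} \b_g\wedge\om_g^{n-1} \;=\; vol(SM) - vol(S\mathbf T(K,\e)).
\end{equation}

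Next, I would apply Fubini to the spherical fibrations $SM \to M$ and $\d_1^+(SM) \to \d M$ to rewrite the three relevant volumes as $\s_{n-1}\,vol(M)$, $\s_{n-1}\,vol(\mathbf T(K,\e))$, and $\theta_{n-1}\,vol(\d M)$ respectively, where $\s_{n-1}$ and $\theta_{n-1}$ depend only on $n$. Dividing the previous identity through by $\theta_{n-1}\,vol(\d M)$ then expresses the observable $\mathbf L_\e^{\mathsf{av}}$ as an explicit affine function of $vol(\mathbf T(K,\e))$, so the bubbling-tube volume is recovered from the scattering data for each $j$. The given roughness coefficient $\rho_j$ in turn delivers the Weyl-tube volume via
\begin{equation}
vol(\mathsf T(K,\e_j)) \;=\; \frac{vol(\mathbf T(K,\e_j))}{1-\rho_j}.
\end{equation}

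Finally, Theorem \ref{We} writes $vol(\mathsf T(K,\e)) = \e^{n-k}\,Q_K(\e^2)$, where $Q_K$ is a polynomial of degree $\lceil k/2\rceil$ in $\e^2$ whose coefficients are (up to the explicit rational prefactors appearing in formula (\ref{a})) the Weyl integrals $\mathsf Q_\ell(K)$. Having $\lceil k/2\rceil + 1$ evaluations of $Q_K$ at the distinct points $\e_0^2 > \e_1^2 > \cdots > \e_{\lceil k/2\rceil}^2$, Lagrange interpolation will uniquely recover $Q_K$, and hence every $\mathsf Q_\ell(K)$ for $0 \leq \ell \leq \lceil k/2\rceil$.

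The main obstacle is not the interpolation step, which is routine once the Weyl volumes are in hand, but the passage from the observable data to $vol(\mathbf T(K,\e))$ in the presence of corner singularities on $\d\mathbf T(K,\e)$; this requires checking that Stoyanov's identity survives when the reflecting boundary fails to be $C^3$ along a codimension-two stratum. That check is already absorbed in the bubbling construction via the transversality and non-degeneracy clauses of Definitions \ref{def.nondegenerate}--\ref{def.bubbling}, together with the conclusion of Lemma \ref{lem.ZERO}.
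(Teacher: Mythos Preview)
Your proposal is correct and follows essentially the same route as the paper's own proof: invoke Lemma \ref{lem.ZERO} to kill the trapping volume, apply the Stoyanov identity and Fubini to convert $\mathbf L^{\mathsf{av}}_{\e_j}$ into $vol(\mathbf T(K,\e_j))$, use the roughness coefficient to pass to $vol(\mathsf T(K,\e_j))$, and then recover $Q_K$ by Lagrange interpolation at the $\lceil k/2\rceil+1$ nodes $\e_j^2$. Your explicit flagging of the corner-singularity issue for Stoyanov's identity is a point the paper treats only implicitly (via the remark preceding Lemma \ref{lem.ZERO}), but otherwise the arguments coincide step for step.
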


\begin{proof} %Let $\s_r$ be the standard volume of the unit sphere $S^r \subset \R^{r+1}$. 
%Put $\e := \e_i$. 
%Evidently, $vol(SM) = vol(SN_\e) + vol(ST(K,\e))$. In turn, $vol(SN_\e)  = \mathcal W(\e) + \mathcal V(\e).$
Since, by Lemma \ref{lem.ZERO}, the measure of $\mathsf{Trap}(S\mathbf N_{\e_j})$ is zero,  we get  $vol(\mathsf{Trap}(S\mathbf N_{\e_j})) = 0$. Therefore, 
$$\int_{\d_1^+(SM)} \mathbf L_{\e_j} \;\om_g^{n-1}\; = \int_{S\mathbf N_{\e_j}} \;\b_g \wedge \om_g^{n-1} =  vol(SM) - vol(S\mathbf T(K, \e_j))$$

%equation (\ref{eq.volumes}) and 
By the definition of $\mathbf L^{\mathsf{av}}_{\e_j}$, we get %By the Fubini Theorem, 
$$\mathbf L^{\mathsf{av}}_{\e_j} \cdot \int_{\d_1^+(SM)}  \;\om_g^{n-1}\; %:= \int_{\d_1^+(SM)} \mathbf L_{\e_j} \;\om_g^{n-1}\; 
= vol(SM) - vol(S\mathbf T(K, \e_j))$$
By the Fubini Theorem, the formula above transforms as
$$\mathbf L^{\mathsf{av}}_{\e_j} \cdot \theta_{n-1} \cdot vol(\d M)= \; \s_{n-1} \cdot \big[vol(M) - vol(\mathbf T(K,\e_j))\big],$$
where $\s_{n-1}$ denotes the volume of the unit $(n-1)$-sphere in $\E^n$ and $\theta_{n-1}$ the volume of the unit $(n-1)$-ball in $\E^{n-1}$.  

By the definition of $\rho_j := \rho(\mathbf T(K,\e_j))$ and putting $\hat\rho_j =_{\mathsf{def}}\, 1- \rho_j$,  %$| vol(\mathsf T(K,\e_j) - vol(\mathbf T(K,\e_j))| < \delta_j$.
we get 
$$\mathbf L^{\mathsf{av}}_{\e_j} \cdot \theta_{n-1} \cdot vol(\d M)= \; \s_{n-1} \cdot \big[vol(M) - \hat\rho_j \cdot vol(\mathsf T(K,\e_j))\big].$$
Solving for $vol(\mathsf T(K,\e_j))$ leads to 
\begin{eqnarray}\label{eq.good}
vol(\mathsf T(K,\e_j)) = \hat\rho_j^{-1} \big\{vol(M) - [\theta_{n-1}/\s_{n-1}] \cdot vol(\d M) \cdot  \mathbf L^{\mathsf{av}}_{\e_j}\big\}.
\end{eqnarray}
%\smallskip

According to (\ref{a}), $vol(\mathsf T(K,\e))$ is given by a polynomial $P_K(\e)$ of the form $\e^{n-k}\cdot Q_K(\e^2)$, where $Q_K(x)$ is a polynomial of degree $\lceil k/2 \rceil$ in $x$.
Therefore, using (\ref{eq.good}), 
\begin{eqnarray}\label{eq.observables}
 Q_K(\e_j^2) = \hat\rho_j^{-1} \e_j^{k-n}\big\{vol(M) - [\theta_{n-1}/\s_{n-1}] \cdot vol(\d M) \cdot  \mathbf L^{\mathsf{av}}_{\e_j}\big\}.
\end{eqnarray}
%\begin{eqnarray}\label{eq.AA}
%\Big |  \mathbf L^{\mathsf{av}}_{\e_j} \cdot vol(\d_1^+(SM))  - \s_{n-1} \cdot \big[vol(M) - \e_j^{n-k}\cdot Q_K(\e^2)\big] \Big | \leq \s_{n-1}\cdot \delta_j.
%\end{eqnarray}
%
%Solving (\ref{eq.AA}), we get the inequalities
%\begin{eqnarray}\label{eq.BB}
%R_j  -  \delta_j \cdot \e_j^{k-n}  \; < \; Q_K(\e_j^2)) \; < \;  R_j  + \delta_j \cdot \e_j^{k-n}, %\nonumber
%\end{eqnarray}
%where $R_j := \e_j^{k-n}\Big[vol(M) - \s_{n-1}^{-1} \cdot \mathbf L^{\mathsf{av}}_{\e_j} \cdot vol(\d_1^+(SM)) \Big]$ is a known quantity.

Note that for a relatively smooth bubbling tube $\mathbf T(K,\e_j)$ the ``smoothness" coefficient $\hat\rho^{-1}_j := \frac{vol(\mathbf T(K, \e))}{vol(\mathsf T(K, \e))}$ is close to $1$. Assuming  that all these roughness coefficients $\{\hat\rho_j\}_j$ are equal, i.e., $j$-independent, will simplify our computations.

Next, we fix a sequence $ \e_0 > \e_1 > \ldots > \e_{\lceil k/2 \rceil} > 0$ and apply the Lagrange interpolation formula 
$$Q_K(u^2) = \sum_{i=0}^{\lceil k/2 \rceil} \Big(\prod_{0\, \leq\, j \leq\, \lceil k/2 \rceil, \; j \neq i}\; \frac{u^2-\e^2_j}{\e^2_i - \e^2_j}\,\Big)\, Q_K(\e_i^2)$$
to reconstruct the polynomial 
$$Q_K(u^2) := \theta_{n-k} \times \; \sum_{\{\ell\, \in\, [0, k], \; \ell \equiv 0 \mod 2\}}\, \frac{\mathsf Q_\ell(K)}{(n-k+2)(n-k+4) \ldots (n-k +\ell)}\; u^{\ell}$$
 from the ``observables" $\{Q_K(\e_j^2)\}_j$ in (\ref{eq.observables}).
%Consider the sequence ``observables"  $R_0, R_1, \ldots , R_{\lceil k/2 \rceil}$. By (\ref{eq.BB}), each $R_i$ determines $Q_K(\e_i^2)$ with the accuracy $2\delta_j/  \e_j^{n-k}$.

%Therefore, the coefficients of $Q_K(u)$ are determined, up to an arrow .... BLA $2 \max_j\{\delta_j \cdot \e_j^{k-n}\}$. ????

Now, we can conclude that the collection of numbers $\{\mathbf L^{\mathsf{av}}_{\e_j}\}_{0 \leq i \leq \lceil k/2 \rceil}$ determines a polynomial $Q_K(\e^2)$ and thus the polynomial $P_K = \e^{n-k}\cdot Q_K$, given by the Weyl formula (\ref{a}). As a result, the intrinsic integral invariants $\{\mathsf Q_\ell(K)\}$ in (\ref{b}) are determined by $\{\mathbf L^{\mathsf{av}}_{\e_j}\}_{0 \leq i \leq \lceil k/2 \rceil}$.
\end{proof}

%{\bf WORK} Our next aim is to estimate the accuracy of the reconstruction of the coefficient $\{\mathsf Q_\ell(K)\}_\ell$ (i.e., of the Weyl invariants) from the ``observables" $\{Q_K(\e_j^2)\}_j$.
%\smallskip

%Let $\lambda := \min_{i \neq j}\{|\e^2_i - \e^2_j|\}$. 

\smallskip

%Following the proof of Theorem \ref{th.main_bubbling}, we get the following assertion.

\begin{conjecture} Let $K$ be a closed smooth $k$-dimensional submanifold of the flat torus $T^n$, $k \leq n-2)$.  Consider a  sequence of numbers $\{\e_0 > \e_1 > \dots > \e_{\lceil k/2\rceil} > 0\}$ such that   $\{\mathsf T(K, \e_j)\}_j$ are the Weyl tubes, properly contained in a smooth compact connected domain $M \subset T^n$. Assume that the natural homomorphism $\pi_1(M) \to \pi_1(T^n)$ of the fundamental groups is trivial \footnote{This assumption implies that the flat metric on $M$, {\bf induced by the metric on the ambient torus}, is non-trapping.} and the billiards $P(K, \e_j) =_{\mathsf{def}}\; T^n \setminus \mathsf{int}(\mathsf T(K, \e_j))$ are ergodic
 \smallskip
 
 Then the averages $\mathcal L^{\mathsf{av}}_{0 \leq j \leq \lceil k/2\rceil}$  of lengths (travel times) functions  $\big\{\mathcal L_{\e_j}: \d_1^+(SM) \to \R\big\}_{0 \leq j \leq \lceil k/2\rceil}$ of billiard trajectories (which originate and terminate in $\d M$) %of the billiards 
 in $\big\{N(K, \e_j) =_{\mathsf{def}} M \setminus \mathsf T(K, \e_j)\big\}_{0 \leq j \leq \lceil k/2\rceil}$ determine intrinsic global invariants $\Big\{\mathsf Q_\ell := \int_K H_\ell \; \mu_K\Big\}_{0 \leq \ell \leq \lceil k/2 \rceil}$ of the Riemannian manifold $K$, defined by the formulae (\ref{b})-(\ref{c}). \hfill $\diamondsuit$
\end{conjecture}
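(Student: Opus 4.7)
The plan is to reduce this conjecture to Corollary \ref{cor.av_length} by verifying Hypothesis A in the toroidal setting: once we show $\mu(\mathsf{Trap}(SN_{\e_j})) = 0$ for each $j$, the corollary applies verbatim. Using the triviality of $\pi_1(M) \to \pi_1(T^n)$, the domain $M$ lifts homeomorphically and isometrically to a compact Euclidean domain $\tilde M \subset \E^n$, so the setting of Corollary \ref{cor.av_length} is exactly recovered under this identification, with Weyl tubes $\mathsf T(K, \e_j)$ and the length functions $\mathcal L_{\e_j}$ preserved. This step also justifies the assertion in the footnote that the induced flat metric on $M$ is non-trapping.

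To establish Hypothesis A, I would compare the billiard in $N_{\e_j}$ with the ambient torus billiard $P(K, \e_j) = T^n \setminus \mathsf{int}(\mathsf T(K, \e_j))$. Since $M \subsetneq T^n$, we have $SN_{\e_j} \subset SP(K, \e_j)$ as a proper measurable subset with $\mu(SP(K, \e_j) \setminus SN_{\e_j}) > 0$. A trapped initial datum $x \in \mathsf{Trap}(SN_{\e_j})$ generates, by definition, a billiard trajectory in $N_{\e_j}$ that never reaches $\d M$; when viewed in the torus, this is precisely a forward orbit of the torus billiard that remains in $SN_{\e_j}$ for all positive time, i.e., an orbit that never enters $SP(K, \e_j) \setminus SN_{\e_j}$. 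Because $P(K, \e_j)$ is assumed ergodic and preserves the Liouville measure, the Birkhoff ergodic theorem applied to the indicator function $\mathbf 1_{SP(K, \e_j)\setminus SN_{\e_j}}$ shows that almost every orbit spends a positive fraction of time outside $SN_{\e_j}$, hence only a null set of orbits can remain inside $SN_{\e_j}$ forever. This yields $\mu(\mathsf{Trap}(SN_{\e_j})) = 0$.

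With Hypothesis A established for all $\e_j$ in the chosen sequence, Corollary \ref{cor.av_length} directly produces the Weyl invariants $\{\mathsf Q_\ell(K)\}_{0 \leq \ell \leq \lceil k/2 \rceil}$ from the averages $\{\mathcal L^{\mathsf{av}}_{\e_j}\}$ via Lagrange interpolation of the polynomial $Q_K$, exactly as in the proof of Theorem \ref{th.main_bubbling}. The main obstacle I expect is in the ergodic step: although ergodicity of $P(K, \e_j)$ is postulated, one must confirm that the Birkhoff argument applies to the actual billiard flow in the presence of its singular loci (tangencies to the smooth convex-inward boundary $\d \mathsf T(K, \e_j)$ and, if $\mathsf T(K, \e_j)$ were to have its normal disk fibers overlap, any non-transverse corner locus). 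For Weyl tubes with $\e_j$ below the normal injectivity radius these loci have zero Lebesgue measure in $SP(K, \e_j)$ by standard dispersing-billiard theory, so they do not disrupt the conclusion, but the verification is the most delicate point. A secondary subtlety is that the ergodicity assumption for the torus billiard is a strong and as-yet-unverified hypothesis in general, which is the reason the statement remains a conjecture rather than a theorem.
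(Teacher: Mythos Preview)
The statement you are addressing is a \emph{conjecture} in the paper; the authors do not supply a proof. There is therefore no ``paper's own proof'' to compare against. What the paper does contain is the closely analogous argument for tamed bubbling tubes (Lemma \ref{lem.ZERO} and Theorem \ref{th.main_bubbling}), and your proposal is precisely the natural transcription of that argument to the Weyl-tube setting, with the ergodicity of the torus billiard taken as a hypothesis rather than established from dispersing-billiard theory.

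Your outline is sound. The lifting of $M$ to $\E^n$ via the triviality of $\pi_1(M)\to\pi_1(T^n)$ is correct and is exactly how one reduces to the Euclidean framework of Corollary \ref{cor.av_length}; it also justifies the non-trapping claim in the footnote. The core step---identifying a trapped orbit in $N_{\e_j}$ with a torus-billiard orbit that never leaves $SN_{\e_j}$, and then invoking Birkhoff's ergodic theorem against the indicator of the positive-measure complement $SP(K,\e_j)\setminus SN_{\e_j}$---is the same mechanism the paper uses at the end of the proof of Lemma \ref{lem.ZERO} (there phrased as ``a positive-measure invariant set contradicts ergodicity''). Two small points to tighten: (i) the set $\mathsf{Trap}(SN_{\e_j})$ in the paper's sense includes points whose orbit fails to reach $\partial M$ in \emph{either} time direction, so you should run the Birkhoff argument once forward and once backward and take the union of the two null sets; (ii) your appeal to ``Corollary \ref{cor.av_length} \ldots\ via Lagrange interpolation, exactly as in Theorem \ref{th.main_bubbling}'' is slightly tangled---the interpolation already lives inside the proof of Theorem \ref{th.main}, on which Corollary \ref{cor.av_length} rests, so once Hypothesis A holds you may simply invoke Corollary \ref{cor.av_length} without further comment.

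Your diagnosis of why this remains a conjecture is exactly right: for a genuine Weyl tube the boundary $\partial\mathsf T(K,\e)$ is not dispersing (it has concave pieces wherever $K$ bends), so the Sinai--Chernov machinery that powers Lemma \ref{lem.ZERO} is unavailable, and ergodicity of $P(K,\e_j)$ must be postulated rather than proved. The singular-set issues you flag (tangencies, possible fiber overlaps) are real but, as you note, are measure-zero for $\e_j$ below the normal injectivity radius and do not obstruct the Birkhoff step once ergodicity is assumed.
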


\section{Some concluding remarks and problems}

%{\bf WORK} If we know a priori the volume of $M$ and that the volume of the tube $T(K, \e)$ with the core $K$ is small, then the function $L_\e: \d_1^+(SM) \setminus \mathsf{Trap}(\d M \leadsto, \, SN_\e) \to \R$ may detect the presence of the trapping locus $\mathsf{Trap}(SN_\e)$.

Weyl's Theorem \ref{We} admits versions for the volumes of tubes in the \emph{spherical} and \emph{projective} spaces, both real and complex. For  domains $M$ in these spaces, such that the metric on $M$ is non-trapping, many of our arguments seem to hold. % with the possible exception of the key Lemma \ref{lem.mu=0}. 
 
In contrast, for compact domains $M$ in the hyperbolic spaces $\mathbb H^n$,  the Weyl equations for the volumes of $\e$-tubes turn into less informative \emph{inequalities} \cite{Gr}. As a result, for domains in $\mathbb H^n$, one may hope just to get some \emph{upper estimates} of the volume $vol(\mathsf T(K, \e))$ and of the trapping volume $vol(\mathsf{Trap}(SN_\e))$ in terms of the scattering data, as it is done in Theorem \ref{th.traping_volume}. Since by \cite{We} isometric embeddings of $K$ produce $\e$-tubes of the same volume, we conjecture that these estimates may be shared by all isometric embeddings $\a: K \hookrightarrow M \setminus \mathsf T(\d M, \e)$.\smallskip
 
%Now let us revisit the euclidean case. From a naive ``physical" viewpoint, for a very small $\e> 0$, the spaces of trapped trajectories in $N_\e$ (with infinitely many elastic reflections from $\d\mathsf T(K, \e)$)  and the spaces of trapped trajectories in $N_0$ (with infinitely many elastic reflections from $K$) should be indistinguishable... This anticipation is behind the following vague question. 

%\begin{question}\label{q.1}
%\emph{
%\noindent{\bf Conjecture/Problem 6.1.} %{\bf WORK}
%For a sufficiently small $\e > 0$, is the dynamics on the set of trapped billiard orbits in $N_\e = M \setminus \mathsf{int}(\mathsf T(K, \e))$ topologically conjugate  to the dynamics of the billiard trajectories in $M$ of a flow for the Hamiltonian $H(m, v) = \frac{1}{2}\| v\|^2 + V(m)$, where $V: M \to \R$ is: 

%$\bullet$ either the $\delta$-function with the support $K$ or%}

%$\bullet$ $V(m) = \phi(\mathsf{dist}^2(m, K))$, where $\phi(t) \geq 0$ is a smooth monotone  function with the support $[0, \e^2]$ and such that $\phi(0) = 1/\e^2$, $\phi(\e^2) = 0$?
%} 
%\hfill $\diamondsuit$
%\end{question}

\smallskip

%Note that Theorem 1.2 from \cite{St} does not guarantee that $$\mathsf{shad.vol}(\mathsf{Trap}(M, K)) := vol(\mathsf{Trap}(SN_0)),$$ {\bf WORK} a reasonable property to anticipate. 
%Based on this anticipation and in view of Lemma \ref{lem.mu=0}, one might formulate the following conjecture:

%\begin{conjecture} For a non-trapping metric $g$ on $M$, if $\dim K < \dim M - 1$ and $K$ be conically generic, then $\mathsf{shad.vol}(\mathsf{Trap}(K, M)) = 0$. \hfill $\diamondsuit$
%\end{conjecture}

%Feeling a bit more adventurous, we state a stronger conjecture.

%\begin{conjecture}\label{conj.1} For a non-trapping metric $g$ on $M$, if $\dim(K) < \dim(M) -1$ and $K$ be conically generic, then the volume $vol(\mathsf{Trap}(SN_\e)) =0$ for all sufficiently small $\e >0$.  %shadow volume $\mathsf{shad.vol}(\mathsf{Trap}(K, M)) = 0$, 
%(at least, for generic regular embeddings $K \hookrightarrow M$). 
%\hfill $\diamondsuit$
%\end{conjecture}

%\begin{conjecture} For a non-trapping metric $g$ on $M$, if the distance function $D_K: M \to \R$ from $K$ has no focal points that belong to $K$, then the volume $vol(\mathsf{Trap}(SN_\e)) = 0$, for all sufficiently small $\e >0$. 
%\hfill $\diamondsuit$
%\end{conjecture}
%
The next problem seems to be quite challenging.\smallskip

%\begin{question}
\noindent{\bf Problem 6.1.} 
Let $M$ be a compact Riemannian manifold with boundary, equipped with a  non-trapping metric $g$.  Let $K$ be on a closed  Riemannian manifold which admits an isometric imbedding in $M$.  For a given (small) $\e > 0$, find the {\sf isometric} embeddings $\a_\star: K \hookrightarrow M$,  for which 
%\begin{itemize} \item
 the volume of the trapping set $\mathsf{Trap}(SN_\e(\a_\star(K)))$ attains its infimum/minimum.   
%\item or, for a given small $\e >0$, the shadow $\e$-volume $vol(\mathsf{Trap}(SN_\e(\a)))$ from (\ref{eq.shadow_vol}) attends its infimum.
%\end{itemize}
%\smallskip

Clearly, the isometry group of $M$ acts on such optimal embeddings $\a_\star$. Thus, the optimal isometric embedding may be not unique.
\smallskip

Note that the problem of finding such optimal $\a_\star$'s is equivalent to the problem of finding an isometric embeddings $\a_\star: K \hookrightarrow M$ for which the average length of a  billiard trajectory $\mathcal L^{\mathsf{av}}\big(N_\e(\a(K))\big)$ in (\ref{eq.av_length}) attains a %\textcolor{red}{}
supremum. 
\hfill $\diamondsuit$
%\end{question}

\smallskip

 %It is less clear what happens to $vol(\mathsf{Trap}(SN_\e))$ as $\e \to 0$.
%\smallskip

 %Consider the {\sf trapping shadow volume} $$\kappa(K, M) := \overline{\lim}_{\e \to 0}\, vol(\mathsf{Trap}(SN_\e)).$$
  
%\begin{conjecture}
%\begin{itemize} 
%\item For any $n-k \geq 2$, $\kappa(K, M) = 0$.\smallskip
%
%\item For any $n-k \geq 2$ and all sufficiently small $\e >0$, $vol(\mathsf{Trap}(SN_\e)) = 0.$\footnote{Of course, this claim implies the first claim.}
%\end{itemize}
% \end{conjecture}

\smallskip
 
 {\it Acknowledgments:} We are indebted to N. Simanyi, D. Szasz and I. Toth for useful discussions.
The work of L.B. was partially supported by the NSF grant DMS-2054659.

We are also grateful to an anonymous referee for useful comments which allowed to improve the exposition.

\end{document}